\numberwithin{equation}{section}
\newtheorem{theorem}{Theorem}[section]
\newtheorem{assumption}[theorem]{Assumption}
\newtheorem{lemma}[theorem]{Lemma}
\newtheorem{proposition}[theorem]{Proposition}
\newcommand{\dif}{\mathrm{d}}
\newcommand{\SUM}[3]{\sum\limits_{{#1}={#2}}^{#3}}
\begin{document}
\title[IBVP for relaxed CNS]{Initial boundary value problem for one-dimensional hyperbolic compressible Navier-Stokes equations}
\author{Yuxi Hu and Yachun Li}
 \thanks{\noindent Yuxi Hu, Department of Mathematics, China University of Mining and Technology, Beijing, 100083, yxhu86@163.com\\
Yachun Li, School of Mathematical Sciences, CMA-Shanghai, MOE-LSC, and SHL-MAC, Shanghai Jiao Tong University,  Shanghai, China, ycli@sjtu.edu.cn
}
\begin{abstract}
An initial boundary value problem for one-dimensional hyperbolic compressible Navier-Stokes equations is investigated. After transforming the system into Lagrangian coordinate, the resulting system possesses a structure with uniform characteristic boundary. By constructing an approximate system with non-characteristic boundary, we  get a uniform  global smooth solutions and obtain a global solution of the original problem by passing to a limit. Moreover, the global  relaxation limit  is also obtained.  \\
{\bf Keywords}: Initial boundary value problem; hyperbolic  compressible Navier-Stokes equations; global solution\\ 
 {\bf AMS classification code}: 35 L 60, 35 B 44
\end{abstract}
\maketitle
\section{Introduction}
In this paper, we investigate the global well-posedness of solutions for one-dimensional relaxed compressible Navier-Stokes equations in a bounded domain. The equations are described as follows
\begin{align}\label{1.1}
\begin{cases}
\rho_t+(\rho u)_x=0,\\
(\rho u)_t+(\rho u^2)_x+p(\rho)_x=S_x,\\
\tau \rho (S_t+ u S_x)+S=\mu u_x,
\end{cases}
\end{align}
where $(t,x)\in (0, \infty) \times \Omega$ and $\Omega$ is a bounded domain in $\mathbb R$. Here, $\rho$, $u$, $S$ denote fluid density, velocity and stress, respectively. 
The pressure $p$ is assumed to satisfy the usual $\gamma$-law, $p(\rho)=a \rho^\gamma$.

The equations $\eqref{1.1}_1$ and $\eqref{1.1}_2$ are consequences of conservation of mass and momentum, respectively. The constitutive equation $\eqref{1.1}_3$ in multi-d form  is proposed recently by Freist\"uhler \cite{Frei1}, going back to Ruggeri \cite{Rug83} and M\"uller\cite{Mu67}, by modifying the usual Maxwell's law \cite{Max1867} with $\rho$ in front of the material derivative of $S$. Such modification make the system into a conservation form and allow one to define weak solutions. Furthermore, system \eqref{1.1} implies an energy dissipation equation
\begin{align}\label{1.2}
\left( \rho h(\rho)+\frac{1}{2} \rho u^2+\frac{\tau}{2\mu} \rho S^2 \right)_t+\left( \rho u h(\rho)+\frac{1}{2} \rho u^3 +\frac{\tau}{2\mu}\rho u S^2\right)_x+\frac{1}{\mu} S^2=0,
\end{align}
where $h^\prime(\rho)=\frac{p(\rho)}{\rho^2}$. The equation \eqref{1.2} allow one to get $L^2$ estimates of solutions immediately. 

We further remark that  in the constitutive relation $\eqref{1.1}_3$, in its linearized form: $\tau S_t+S=\mu u_x$, the positive parameter $\tau$ denotes the relaxation time describing the time lag in the response of the stress tensor to the velocity gradient, cf. also Christov and Jordan \cite{CJo}.   Pelton et al. \cite{Peetal} showed that such a"time lag'' cannot be neglected, even for simple fluids, in
the experiments of high-frequency vibration of nano-scale mechanical devices immersed in water-glycerol mixtures. It turned out
  that, cf. also \cite{ChSa015}, equation $\eqref{1.1}_3$ provides a general formalism  to characterize the fluid-structure interaction
of nano-scale mechanical devices vibrating in simple fluids.

We are interested in the initial boundary value problem to system \eqref{1.1}   for the functions
\begin{align*}
(\rho, u, S): [0, +\infty) \times \Omega \rightarrow (0, \infty)\times \mathbb R \times \mathbb R
\end{align*}
with initial conditions
\begin{align} \label{1.3}
(\rho, u, S)(0,x)=(\rho_0, u_0, S_0)(x)
\end{align}
 and boundary conditions
 \begin{align}\label{1.4}
u\big|_{\partial\Omega}=0.
\end{align}

We note that the Cauchy problem for system \eqref{1.1} and its multi-d form have been widely studied. In particular, Yong \cite{YW14} first studied the isentropic Navier-Stokes equations with revised Maxwell law( in its linear form) and get a local well-posedness theory and local relaxation limit. This results are extend by Hu and Racke \cite{HR2017} to non-isentropic case and by Peng \cite{PY2020} to more general case. The blow-up phenomenon was studied by Hu and Wang \cite{HW2019, HW2020} and B\"arlin \cite{BJ2022}. By neglecting $\rho$ in equation $\eqref{1.1}_3$, the asymptotic stability of viscous shock wave and rarefaction wave were also investigated, see \cite{HW2022, HW2023, Frei2} for more details. However, to our knowledge, there is few results concerning the initial boundary value problems. The aim of this manuscript is to study the boundary effect to the relaxed system and whether the local and/or global well-posedness can be obtained with the boundary.

We rewrite the system \eqref{1.1} in Lagrangian coordinates as 
\begin{align}\label{1.5}
\begin{cases}
v_t=u_x,\\
u_t+p(v)_x=S_x,\\
\tau S_t +vS=\mu u_x,
\end{cases}
\end{align}
where $v=\frac{1}{\rho}$ is the specific volume per unit mass. The initial and boundary conditions \eqref{1.3}-\eqref{1.4} turn to 
\begin{align}\label{1.6}
(v, u, S)(0,x)=(v_0, u_0, S_0)(x)
\end{align}
and
\begin{align}\label{1.7}
u\big|_{\partial\Omega}=0.
\end{align}

Let $U=(v,u,S)$, then we have
\begin{align} \label{1.8}
A^0U_t+A^1(U) U_x+B(U)U=0,
\end{align}
where 
\begin{align*}
A^0=\mathrm{diag}\{1, 1, \tau\}, A^1(U)=
\begin{pmatrix}
0&-1&0\\
p^\prime(v)&0&-1\\
0&-\mu&0
\end{pmatrix},
B(U)=\mathrm{diag}\{0, 0, v\}.
\end{align*}
Note that $\det{ (A^0)^{-1}A^1(U)}=0$ for any $U \in G:=\{ (0, \infty)\times \mathbb R \times \mathbb R\}$. Therefore, we get a problem with uniform characteristic boundary. 
For such problem, there is no general well-posedness result, even locally, \cite{SCH86, Chen2007}.  
The difficulty lies in that one can not get the estimates of $U_x$ or $U_{xx}$ directly by the estimates of $U_t$ and $U_{tt}$. Therefore, the loss of derivative may occur near the boundary. 

Note that when $\tau=0$, the system \eqref{1.1}  reduces to the classical isentropic Navier-Stokes equations
\begin{align}\label{CNS}
\begin{cases}
v_t=u_x,\\
u_t+p(v)_x=\left(\frac{\mu u_x}{v}\right)_x,\\
\end{cases}
\end{align}
 for which the global large solution (away from vacuum) was  already known, see \cite{Kanel1968}.
 But the methods there can not be applied to the relaxed system due to the essential change of structure, i.e., from hyperbolic-parabolic to pure hyperbolic system. On the other hand, it has been show that, see \cite{HW2019, BJ2022},
 solutions to the relaxed system may blowup in finite time for some large data. Therefore, a global defined  smooth solutions should  not be expected for large data.

Our approach is as follows. First, following the ideas in \cite{SCH86} for compressible Euler equations, we construct an approximating system with non-characteristic boundary.  By justifying that the boundary condition is maximally nonnegative, we give a local well-posedness theory by use of classical result. Second, we establish a uniform a priori estimates and get a uniform global smooth solutions for the approximating system. In this step,  some high-order boundary terms  appear when one do integration by part. This problem is solved   by using of the essential structure of the system and by introducing some auxiliary functions. Finally, by usual compactness-arguement, we get a global smooth solution for original system \eqref{1.5}-\eqref{1.7}.

The following assumptions are needed throughout the paper:
\begin{assumption}\label{assu}
\mbox{}\\
 (1) The initial and boundary data satisfy the usual compatibility condition
\begin{align}\label{comp}
	\partial^k_tu(0,x)\big|_{\partial\Omega}=0,
	k=0,1
\end{align}
where $\partial_tu(0,x)$  are defined recursively by equation $\eqref{1.5}_2$. \\
(3) The initial data is well-prepared in the following sense:
\begin{align}\label{new-hu4-1}
 \| v_0S_0-\mu (u_0)_x \|_{H^1} =O(\sqrt{\tau}),\, \mathrm{as}\, \tau \rightarrow 0.
\end{align}
(4) $V_0^k \in H^{2-k} $ for $k=0, 1, 2$, where 
\begin{align*}
V_0^k:= \partial_t^k\left(v-1, u , \sqrt{\tau} S\right)(t=0, \cdot), \, k=0, 1\\
V_0^2:= \tau \partial_t^2 (v-1, u,  \sqrt{\tau} S)(t=0, \cdot),
\end{align*}
where $V_0^1, V_0^2$ are defined recursively by equations \eqref{1.5}. 
\end{assumption}

Our main results are stated as follows.
\begin{theorem}\label{th1.1}
Let Assumption \ref{assu} hold. Then, there exists a constant $\epsilon_0>0$ such that if 
\begin{align}
E_0:=\SUM k 0 2 \|V_0^k\|_{H^{2-k}}^2<\epsilon_0
\end{align}
the system \eqref{1.5}-\eqref{1.7} has a unique global solution $(v, u, S)\in C^1([0,\infty)\times \Omega)$ with
\begin{align}
(v-1, u, S)\in C([0, \infty), H^{2-\delta_0}(\Omega)) \cap C^1([0, \infty), H^{1-\delta_0}(\Omega))
\end{align}
for any $\delta_0>0$ and
satisfying
\begin{align}
\sup_{0\le t <\infty}\|(v-1, u, \sqrt{\tau}S)(t, \cdot)\|_{H^2}^2+ \int_0^\infty \left( \|(v_x, u_x)\|_{H^1}^2+ \|S\|_{H^2}^2 \right) \dif t \le C E_0,
\end{align}
where $C$ is a universal constant independent of $\tau$.
\end{theorem}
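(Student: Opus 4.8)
The plan is to carry out the four-step program announced after \eqref{1.8}, with essentially all the difficulty concentrated in the uniform a priori estimate. I would begin by symmetrizing \eqref{1.5}. Since the Lagrangian pressure $p(v)=av^{-\gamma}$ satisfies $p'(v)<0$, multiplying the three equations by $-p'(v)$, $1$ and $1/\mu$ respectively recasts \eqref{1.8} as $\tilde A^0(U)U_t+\tilde A^1(U)U_x+\tilde B(U)U=0$ with
\begin{align*}
\tilde A^0=\mathrm{diag}\{-p'(v),\,1,\,\tau/\mu\},\quad
\tilde A^1=\begin{pmatrix}0&p'(v)&0\\ p'(v)&0&-1\\ 0&-1&0\end{pmatrix},\quad
\tilde B=\mathrm{diag}\{0,0,v/\mu\},
\end{align*}
so that $\tilde A^0$ is symmetric positive definite and $\tilde A^1$ is symmetric; moreover $U^\top\tilde A^1U=2u(p'(v)v-S)$ is proportional to $u$ and hence vanishes on $\partial\Omega$, which is exactly why the boundary is conservative yet characteristic ($\det\tilde A^1=0$). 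To break the degeneracy I would introduce $\delta>0$ and replace $\tilde A^1$ by $\tilde A^1_\delta$ obtained by adding $\delta$ to its $(1,1)$ entry — equivalently augmenting $\eqref{1.5}_1$ by a small transport term — so that $\det\tilde A^1_\delta=-\delta\neq0$ and the near-zero characteristic speed becomes $O(\delta)$; the boundary quadratic form then reads $\delta v^2$ at $u=0$, signalling that one extra boundary condition on the perturbed mode is needed. I would choose this condition so that the full boundary operator is maximally nonnegative for $\tilde A^1_\delta$, after which the classical theory of symmetric hyperbolic initial boundary value problems with maximally dissipative boundary conditions furnishes a local-in-time smooth solution $U^\delta$ for each fixed $\delta$.

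The heart of the argument is an a priori estimate for $U^\delta$ uniform in both $\delta$ and $\tau$. The zeroth-order bound comes from the Lagrangian form of the entropy identity: using $\Phi'(v)=-p(v)$ one finds
\begin{align*}
\frac{d}{dt}\int_\Omega\!\Big(\Phi(v)+\tfrac12u^2+\tfrac{\tau}{2\mu}S^2\Big)\dif x+\frac1\mu\int_\Omega vS^2\,\dif x=\big[(S-p(v))u\big]_{\partial\Omega}=0,
\end{align*}
which already delivers the $\tau$-free dissipation $\int_0^\infty\|S\|_{L^2}^2\,\dif t$ while identifying $\sqrt\tau\,S$ as the correct instantaneous variable, exactly as in the statement of Theorem \ref{th1.1}. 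Differentiating the system once and twice in $t$ and repeating the estimate, I would invoke the compatibility conditions \eqref{comp}, which guarantee $\partial_t^ku|_{\partial\Omega}=0$ for $k=0,1$, so that every boundary term produced by the time-differentiated entropy again vanishes; this yields uniform control of $U_t$ and of $\tau U_{tt}$ in the weighted energy norms dictated by the $V_0^k$ of Assumption \ref{assu}.

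The step I expect to be genuinely hard is the passage from this temporal regularity to the spatial $H^2$ bound in Theorem \ref{th1.1}, which is precisely the loss-of-derivative phenomenon flagged after \eqref{1.8}. The relations $u_x=v_t$ (from $\eqref{1.5}_1$) and $\mu u_x=\tau S_t+vS$ (from $\eqref{1.5}_3$) recover $u_x$ and the spatial derivatives of $S$ from quantities already controlled, but $v_x$ is not determined by time derivatives alone; to produce it I would test the momentum equation $\eqref{1.5}_2$ against $v_x$ (and, at the next order, against $v_{xxx}$), using $v_t=u_x$ to integrate the $u_t$-term by parts. Because $p'(v)<0$ this generates the favourable coercive term $\int|p'(v)|v_x^2$, but it also produces high-order boundary traces such as $[Sv_x]_{\partial\Omega}$ and $[S_xv_{xx}]_{\partial\Omega}$, which are not controlled by $u|_{\partial\Omega}=0$ since $u_x$, $v_x$ and $S_x$ need not vanish there. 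These are the boundary terms the introduction refers to, and handling them is the crux: I would eliminate them structurally, using the boundary relations implied by the equations — at $\partial\Omega$ one has $u_t=0$, hence $p'(v)v_x=S_x$ and $\mu u_x=\tau S_t+vS$ — to re-express each offending trace through time derivatives of $u$ (which vanish) and through the dissipative quantity $S$, and by introducing auxiliary correctors with homogeneous boundary values so that the surviving contributions are absorbed into $\int vS^2$ and the $\delta$-boundary term $\delta[v^2]_{\partial\Omega}$. Collecting the hierarchy and invoking the smallness $E_0<\epsilon_0$ to close the nonlinear terms then gives the stated bound with a constant $C$ independent of $\tau$ (and of $\delta$).

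Finally, with the $\delta$-uniform estimate in hand I would let $\delta\to0$. Weak-$\ast$ compactness in the energy spaces together with the Aubin--Lions lemma gives a limit $U$ in $C([0,\infty),H^{2-\delta_0}(\Omega))\cap C^1([0,\infty),H^{1-\delta_0}(\Omega))$ for every $\delta_0>0$, the fractional loss reflecting the characteristic boundary of the limiting problem, and strong convergence suffices to pass to the limit in the nonlinear fluxes and recover a global solution of \eqref{1.5}--\eqref{1.7} inheriting the same uniform bound. Uniqueness within this class I would obtain by a direct $L^2$ energy estimate on the difference of two solutions, the boundary terms vanishing once more because the difference of the velocities vanishes on $\partial\Omega$.
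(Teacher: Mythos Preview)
Your overall four–step program (regularize to a non-characteristic problem, prove $\tau$-uniform a~priori estimates, continue globally, pass to the limit) is exactly the paper's strategy, and your identification of the passage from temporal to spatial $H^2$ regularity as the crux is correct. Two concrete choices in your proposal, however, diverge from what actually makes the argument go through.

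\textbf{The regularization.} You propose to add a constant $\delta$ to the $(1,1)$ entry of $\tilde A^1$, i.e.\ to perturb the mass equation by a transport term. On the kernel of the boundary matrix $M$ (where $u=0$) the resulting boundary quadratic form is $\xi^{\top}\tilde A^1_\delta\,\nu\,\xi=\nu\,\delta\,\xi_1^2$, with outward normal $\nu=+1$ at $x=1$ and $\nu=-1$ at $x=0$. Hence the form has the \emph{wrong sign} at one of the two endpoints, and the single condition $u|_{\partial\Omega}=0$ is not maximally nonnegative there; your remark that one can ``absorb into the $\delta$-boundary term $\delta[v^2]_{\partial\Omega}$'' is therefore not right, since that term is a loss, not a gain, at one side. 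You could repair this by imposing an extra inflow condition on $v$ at the offending endpoint, but you would then have to propagate higher compatibility for that artificial condition and show it becomes vacuous in the limit. The paper avoids all of this by perturbing the \emph{third} equation instead, with a sign-changing coefficient $b(x)=2x-1$: the approximate constitutive law reads $\tau(S_t+\epsilon b(x)S_x)+vS=\mu u_x$. Because $b(x)\nu=+1$ at \emph{both} endpoints, the boundary form on $\ker M$ becomes $\tfrac{\tau\epsilon}{\mu}\xi_3^2\ge 0$ simultaneously at $x=0$ and $x=1$, so the original condition $u|_{\partial\Omega}=0$ is already maximally nonnegative and no extra boundary data are needed.

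\textbf{The boundary terms in the $H^2$ estimate.} Your plan to exploit the algebraic relations $p'(v)v_x=S_x$ and $\mu u_x=\tau S_t+vS$ on $\partial\Omega$ is what the paper uses at first order (it kills $(p(v)_x-S_x)u_x|_0^1$ via $u_t|_{\partial\Omega}=0$) and at the mixed second order (formally $u_{tt}|_{\partial\Omega}=0$ gives $(p(v)_{tx}-S_{tx})|_{\partial\Omega}=0$). But the pure $xx$-estimate produces the trace $u_{tx}u_{xx}\big|_0^1$, which these relations do \emph{not} annihilate: neither $u_{tx}$ nor $u_{xx}$ vanishes on $\partial\Omega$. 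The paper's device here (its Lemma~3.10) is to test the twice-$x$-differentiated constitutive equation against $\tfrac{v}{\mu}\bar b(x)\,u_{xx}$ with $\bar b=-b$. The point is that $-\int u_{xxx}\,\bar b\,u_{xx}\,\dif x=\tfrac12(u_{xx}^2(t,1)+u_{xx}^2(t,0))-\int u_{xx}^2\,\dif x$, so the weight manufactures genuine boundary dissipation of $u_{xx}$; an analogous test with $\bar b(x)u_{tx}$ controls $u_{tx}$ on $\partial\Omega$, and the residual $S_{xx}$ boundary traces are then absorbed by the good $\tfrac{\tau\epsilon}{2\mu}(S_{xx}^2(t,1)+S_{xx}^2(t,0))$ produced in the $xx$-energy estimate---again a dividend of the sign-changing perturbation. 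Your ``auxiliary correctors'' remain too vague to substitute for this, and the $\delta$-term you hoped to absorb into is, as noted above, of the wrong sign at one boundary.
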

Based on the uniform estimates of solutions, we have the following convergence theorem.
\begin{theorem}\label{th1.2}
(Global weak convergence). Let  $(v^\tau, u^\tau, S^\tau)$ be the global solutions obtained in Theorem \ref{th1.1}, then there exists
functions $( v^0,  u^0)\in L^\infty(\mathbb R^+; H^2(\Omega))$ and $S^0\in L^2(\mathbb R^+; H^2(\Omega))$, such that, as $\tau\rightarrow 0$ up to subsequences,
\begin{align}
(v^\tau, u^\tau) \rightharpoonup ( v^0,  u^0) \qquad \mathrm{weak}-\ast \quad \mathrm{in}\quad L^\infty(\mathbb R^+; H^2(\Omega)),\label{1.17}\\
S^\tau \rightharpoonup  S^0 \qquad \mathrm{weakly} \quad \mathrm{in}\quad L^2(\mathbb R^+; H^2(\Omega)), \label{1.18}
\end{align}
where $(v^0,  u^0)$ is the solution to the one-dimensional  isentropic compressible Navier-Stokes equations  \eqref{CNS}, with initial value $( v_0^0,  u_0^0)$ which 
is the weak limit of $(v_0^\tau, u_0^\tau)$ up to subsequences. Moreover, 
\begin{align*}
 S^0=\mu   \frac{(u^0)_x}{v^0}.
\end{align*}
\end{theorem}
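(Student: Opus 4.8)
The plan is to read off weak limits directly from the uniform bounds of Theorem \ref{th1.1}, then upgrade $(v^\tau,u^\tau)$ to \emph{strong} convergence via an Aubin--Lions compactness argument so that the nonlinearities $p(v^\tau)$ and $v^\tau S^\tau$ pass to the limit, and finally identify the limiting system by letting $\tau\to 0$ in each equation of \eqref{1.5} separately, paying special attention to the relaxation term $\tau S_t^\tau$.

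First I would extract the weak limits. The uniform estimate in Theorem \ref{th1.1} shows that $(v^\tau-1,u^\tau)$ is bounded in $L^\infty(\mathbb R^+;H^2(\Omega))$ and that $S^\tau$ is bounded in $L^2(\mathbb R^+;H^2(\Omega))$, both independently of $\tau$. By Banach--Alaoglu, along a subsequence one obtains $(v^\tau,u^\tau)\rightharpoonup(v^0,u^0)$ weak-$\ast$ in $L^\infty(\mathbb R^+;H^2(\Omega))$ and $S^\tau\rightharpoonup S^0$ weakly in $L^2(\mathbb R^+;H^2(\Omega))$, which is precisely \eqref{1.17}--\eqref{1.18}. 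This step is routine.

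The crucial step is to obtain strong convergence of $(v^\tau,u^\tau)$, which is what allows passage to the limit in the nonlinear products. Here I would bound the time derivatives uniformly in $\tau$: from $\eqref{1.5}_1$, $v_t^\tau=u_x^\tau$ is bounded in $L^2(\mathbb R^+;H^1(\Omega))$ by the dissipation integral; from $\eqref{1.5}_2$, $u_t^\tau=S_x^\tau-p'(v^\tau)v_x^\tau$ is bounded in $L^2(\mathbb R^+;L^2(\Omega))$, using that $S^\tau\in L^2(H^2)$, $v_x^\tau\in L^2(H^1)$ and $p'(v^\tau)\in L^\infty$ (since $v^\tau$ stays in a compact subset of $(0,\infty)$ by the $H^2\hookrightarrow L^\infty$ bound on $v^\tau-1$). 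With $(v^\tau-1,u^\tau)$ bounded in $L^\infty(H^2)$ and the time derivatives bounded in $L^2(L^2)$, the Aubin--Lions--Simon lemma, via $H^2\hookrightarrow\hookrightarrow H^s\hookrightarrow L^2$ for $s<2$, yields on every finite interval $[0,T]$ and along a further (diagonal) subsequence that $(v^\tau,u^\tau)\to(v^0,u^0)$ strongly in $C([0,T];H^s(\Omega))$ for each $s<2$. In one space dimension $H^s\hookrightarrow C(\bar\Omega)$ for $s>1/2$, so $v^\tau\to v^0$ uniformly and hence $p(v^\tau)\to p(v^0)$, with $p(v^\tau)_x\rightharpoonup p(v^0)_x$.

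Finally I would pass to the limit equation by equation. The top-order terms in $\eqref{1.5}_1$ and $\eqref{1.5}_2$ are linear, so their distributional limits give $v_t^0=u_x^0$ and $u_t^0+p(v^0)_x=S_x^0$. For $\eqref{1.5}_3$ tested against $\phi\in C_c^\infty$, integrating the relaxation term by parts in time produces $-\tau\iint S^\tau\phi_t-\tau\int S_0^\tau\phi(0,\cdot)$; since $S^\tau$ is bounded in $L^2(L^2)$ and $\sqrt\tau\,S_0^\tau$ is bounded in $L^2$ by Assumption \ref{assu}, both terms are $O(\tau)$, so $\tau S_t^\tau\to0$. The product $v^\tau S^\tau\rightharpoonup v^0S^0$ because strong convergence of $v^\tau$ combined with weak convergence of $S^\tau$ passes to the limit, while $\mu u_x^\tau\rightharpoonup\mu u_x^0$; hence $v^0S^0=\mu u_x^0$, i.e. $S^0=\mu (u^0)_x/v^0$. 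Substituting $S_x^0$ back into the limit of $\eqref{1.5}_2$ together with $v_t^0=u_x^0$ recovers exactly \eqref{CNS}, and the strong convergence in $C([0,T];H^s)$ identifies the initial datum as $(v^0,u^0)(0,\cdot)=(v_0^0,u_0^0)$, the weak limit of $(v_0^\tau,u_0^\tau)$. I expect the main obstacle to be the compactness argument itself: verifying the uniform time-derivative bounds and setting up the correct Aubin--Lions framework on the unbounded time interval (handled by localizing in $t$ and a diagonal extraction), after which the identification of the limit is a direct consequence.
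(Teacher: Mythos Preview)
Your proposal is correct and follows essentially the same route as the paper: extract weak limits from the uniform bounds, upgrade $(v^\tau,u^\tau)$ to strong convergence in $C([0,T];H^{2-\delta_0})$ via an Aubin--Lions argument using the $\tau$-uniform bounds on $\partial_t v^\tau,\partial_t u^\tau$, and then pass to the limit in each equation. The only cosmetic difference is in the treatment of $\tau S_t^\tau$: the paper argues that boundedness of $\sqrt{\tau}\,S^\tau$ in $L^\infty(H^2)$ forces $\tau S^\tau\to 0$ and hence $\tau\partial_t S^\tau\to 0$ in $\mathcal D'$, whereas you integrate by parts in time against test functions---these are equivalent (note your initial-data term is actually $O(\sqrt{\tau})$ rather than $O(\tau)$, but this is immaterial).
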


The paper is organized as follows. In Section 2 we show the local existence theorem of an approximated system. The uniform a priori estimates is established in Section 3. In Section 4, we justify the limit and prove our main theorem.

Finally, we introduce some notation.  $W^{m,p}=W^{m,p}( \Omega),\,0\le m\le
\infty,\,1 \le p\le \infty$, denotes the usual Sobolev space with norm $\|
\cdot \|_{W^{m,p}}$, $H^m$ and $L^p$
stand for $W^{m,2}$ resp. $W^{0,p}$. Without loss of generality, we take $\Omega=[0, 1]$.

\section{Approximated system and Local existence}
In this part, we construct an approximate system with non-characteristic boundary, for which a unique local defined smooth solution is given.

%\re{Compatibility condition:}
%\begin{align}
%u_0(0)=u_0(1)=0, u_t(0,0)=u_t(0, 1)=0, u_{tt}(0, 0)=u_{tt}(0,1)=0.
%\end{align}

Our approximate system is as 
follows:
\begin{align}\label{2.1}
\begin{cases}
v_t^\epsilon=u_x^\epsilon,\\
u_t^\epsilon+p(v^\epsilon)_x=S_x^\epsilon,\\
\tau (S_t^\epsilon+\epsilon b(x) S_x^\epsilon)+vS^\epsilon=\mu u_x^\epsilon, 
\end{cases}
\end{align}
where $b(x)=2x-1$.
Initial and boundary conditions are given by 
\begin{align} \label{2.2}
(v^\epsilon,u^\epsilon,S^\epsilon)(0,x)=(v_0, u_0, S_0)(x)
\end{align}
and
\begin{align}\label{2.3}
u^\epsilon(t,0)=u^\epsilon(t,1)=0.
\end{align}
%Here  $(v_0^n, u_0^n, S_0^n)\in H^3$ are sequences that converge to  $(v_0, u_0, S_0)$ in $H^2$-norm and satisfy the compatibility conditions through order two, i.e., $\partial_t^k u(0, x)|_{\partial \Omega}=0$ for $0\le k \le 2$. 
Let $U^\epsilon=(v^\epsilon,u^\epsilon,S^\epsilon)$, then we have
\begin{align} \label{2.4}
A^0U^\epsilon_t+A^1(U^\epsilon) U^\epsilon_x+B(U^\epsilon)U^\epsilon=0,
\end{align}
where 
\begin{align*}
A^0=\mathrm{diag}\{-p^\prime(v^\epsilon), 1, \frac{\tau}{\mu}\}, A^1(U^\epsilon)=
\begin{pmatrix}
0&p^\prime(v^\epsilon)&0\\
p^\prime(v^\epsilon)&0&-1\\
0&-1&\frac{\tau}{\mu} \epsilon b(x)
\end{pmatrix},
B(U^\epsilon)=\mathrm{diag}\{0, 0, \frac{v^\epsilon}{\mu}\}
\end{align*}
with initial condition $U^\epsilon (0,x)=U_0:=(v_0, u_0, S_0)(x)$ and boundary condition
\begin{align}\label{boundary}
MU^\epsilon|_{\partial \Omega}=0, \quad\mathrm{with}\quad
M=
\begin{pmatrix}
0&0&0\\
0&1&0\\
0&0&0
\end{pmatrix}.
\end{align}
Note that $\det{ (A^0)^{-1}A^1(U^\epsilon)}\big|_\Omega=p^\prime(v^\epsilon) \epsilon b(x)|_\Omega\neq0$ for any $U \in G:=\{ (0, \infty)\times \mathbb R \times \mathbb R\}$. 
So, the boundary condition \eqref{2.3} is a  non-characteristic boundary for any $\epsilon>0$. 

Now, we show the boundary condition is maximally nonnegative, i.e., the matrix $A^1(U^\epsilon) \cdot \nu\big|_{\partial \Omega}$ is positive semidefinite on the null space  $N$ of $M$ but not on any space containing $N$,  see \cite{SCH86}. Let $\xi=(\xi_1, 0, \xi_2)^T\in \ker M=\mathrm{span} \{(1,0,0)^T, (0,0,1)^T\}$, then 
$$
\xi^T A^1\cdot \nu \big|_{\partial\Omega}\xi=\frac{\tau \epsilon}{\mu} \xi_2^2 \ge 0.
$$
On the other hand, the only space containing $\ker M$  as a proper subspace is $\mathbb R^3$. So, we take $p=(1,1,0)^T\in \mathbb R^3$, and calculate 
$$
p^T A^1\cdot \nu \big|_{\partial\Omega} p=2 p^\prime(v^\epsilon) < 0.
$$
Thus, the maximally nonnegative property is satisfied.
Therefore, classical results implies local  well-posedness theory, see \cite{SCH86}.  
\begin{theorem}
Suppose $(v_0, u_0, S_0)\in H^2$ satisfying the compatibility condition \eqref{comp} and $$\min_{x\in[0,1]} v_0(x)>0.$$ Then there exists a unique local solution $(v^\epsilon, u^\epsilon, S^\epsilon)$ to initial boundary value problem \eqref{2.1}-\eqref{2.3} on some time interval $[0, T]$ with
\begin{align*}
(v^\epsilon, u^\epsilon, S^\epsilon)\in C^0([0,T], H^2)\cap C^1([0, T], H^1),\\
\min_{x\in[0,1]} v(t, x)>0,\qquad \forall t >0.
\end{align*}
\end{theorem}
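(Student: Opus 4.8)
The plan is to recognize that, after the symmetrization recorded above, the IBVP \eqref{2.1}-\eqref{2.3} is a quasilinear symmetric hyperbolic system with a non-characteristic, maximally nonnegative boundary, and hence falls within the scope of the classical local well-posedness theory cited as \cite{SCH86}; the proof then consists of checking the structural hypotheses, running an iteration to handle the quasilinearity, and propagating the positivity of $v^\epsilon$. Most of the structural work is already in place: on the physical region $G$ the matrix $A^0=\mathrm{diag}\{-p'(v^\epsilon),1,\tau/\mu\}$ is symmetric and positive definite, since the $\gamma$-law gives $-p'(v)=a\gamma v^{-\gamma-1}>0$ for $v>0$; the matrix $A^1(U^\epsilon)$ is symmetric; the relaxation perturbation $\epsilon b(x)S_x$ renders the boundary non-characteristic for every $\epsilon>0$, as the determinant computation shows; and the sign computations on $\ker M$ and on $\mathbb{R}^3$ establish maximal nonnegativity of $A^1\cdot\nu|_{\partial\Omega}$.

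First I would treat the associated linearized problem: freezing the coefficients at a background state $\bar U$ with $\bar v$ bounded away from zero, the operator $A^0(\bar U)\partial_t+A^1(\bar U)\partial_x+B(\bar U)$ is symmetric hyperbolic with a maximally dissipative boundary, so the standard linear theory provides a unique solution together with the basic $L^2$ energy estimate, in which integration by parts produces a boundary contribution $\int_{\partial\Omega}U^T(A^1\cdot\nu)U$ whose sign is controlled by the maximal nonnegativity verified above. Differentiating in the tangential variable $t$ (which preserves the boundary condition $u=0$) bounds $\|\partial_t^k U\|$ for $k\le 2$; combining these with the standard interior regularity of symmetric hyperbolic systems and, near the boundary where $A^1$ is invertible, the relation $U_x=-(A^1)^{-1}(A^0 U_t+BU)$ to trade the normal derivative for the tangential one, one closes the full $H^2$ estimate. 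This is where the compatibility conditions \eqref{comp} for $k=0,1$ enter, ensuring the data are consistent with the boundary to the order needed for $H^2$ regularity down to $t=0$.

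Next I would set up a Picard-type iteration, letting $U^{(n+1)}$ solve the linear problem with coefficients frozen at $U^{(n)}$ and starting from $U^{(0)}\equiv U_0$. The uniform $H^2$ energy estimates keep the iterates in a fixed ball of $C^0([0,T],H^2)$ on a common short interval $[0,T]$, and a contraction estimate in the weaker norm $L^2$ (or $H^1$) produces a limit $U^\epsilon$; the uniform $H^2$ bound then upgrades the convergence so that $U^\epsilon\in C^0([0,T],H^2)\cap C^1([0,T],H^1)$. Uniqueness follows by applying the same $L^2$ energy estimate to the difference of two solutions. For the positivity, since $v_t^\epsilon=u_x^\epsilon$ and $u^\epsilon\in C^0([0,T],H^2)$ embeds into $C^0([0,T]\times[0,1])$ in one space dimension, one has $\min_x v^\epsilon(t,\cdot)\ge \min_x v_0-Ct$, which stays positive after shrinking $T$; this also keeps $A^0$ positive definite throughout the iteration.

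The main obstacle I anticipate is the closure of the top-order ($H^2$) a priori estimate up to the boundary: although the boundary matrix is maximally nonnegative, the estimate for the highest derivatives requires trading normal derivatives for tangential ones near the boundary via the non-characteristic structure and controlling the commutators and boundary terms generated by the quasilinear coefficients, so that the favorable sign of $\int_{\partial\Omega}U^T(A^1\cdot\nu)U$ survives. Once this estimate is made uniform in the iteration index $n$ (and in $t$ on $[0,T]$), the remaining compactness and contraction arguments are routine.
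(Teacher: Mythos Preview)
Your proposal is correct and is in spirit the same approach as the paper: recognize that \eqref{2.4}--\eqref{boundary} is a quasilinear symmetric hyperbolic system with non-characteristic, maximally nonnegative boundary and invoke the classical theory of \cite{SCH86}. The only difference is one of detail---the paper simply verifies the structural hypotheses (symmetry of $A^0,A^1$, positivity of $A^0$ on $G$, $\det(A^0)^{-1}A^1|_{\partial\Omega}\neq 0$, and the two sign checks on $\ker M$ and $\mathbb R^3$) and then cites \cite{SCH86} outright, whereas you sketch the underlying linearization/iteration machinery and the positivity-of-$v$ argument; either level of detail is acceptable here.
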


\section{Uniform a priori estimates}
In this part, we shall get a uniform (with respect to $\tau$ and $\epsilon$) a priori estimates  which allows us to get uniform global solutions. 
For simplicity, we still denote $(v^\epsilon, u^\epsilon, S^\epsilon)$ by $(v, u, S)$ without confusion. 

First, we define the energy 
\begin{align} \label{3.1}
E(t):=\sup_{0\leq s \leq t} \left(\SUM k 0 1 \left\|\partial_t^k(v-1, u, \sqrt{\tau  }S) (s, \cdot)\right\|_{H^{2-k}}^{2}+\tau^2 \| \partial_t^2 (v, u,  \sqrt{\tau} S)(s, \cdot)\|_{L^2}^2\right)
\end{align}
and dissipation
\begin{align}\label{3.2}
\mathcal{D}(t):=\sum_{|\alpha|=1}^{2}\left\|D^{\alpha}(v,u) (t, \cdot)\right\|_{L^{2}}^{2}+ \SUM k 0 1 \left\|\partial_t^k(S)(t, \cdot)\right\|_{H^{2-k}}^{2}+\tau^2 \| S_{tt}(t, \cdot)\|_{L^2}^2
\end{align}
where $D=(\partial_t, \partial_x)$.  We are aiming to show the following a priori estimates.
\begin{proposition}\label{Prop3.1}
Let $(v, u, S)\in C^0([0,T], H^2)$ be local solutions to system \eqref{2.1}-\eqref{2.3}.  Assume that there exists  a small $\delta$ such that $E(t)\le \delta$, then  we have
\begin{align}\label{3.3}
E(t)+\int_0^t \mathcal D(s)\dif s \le C \left(E_0+E^\frac{1}{2}(t) \int_0^t \mathcal D(s)\dif s\right),
\end{align}
where $C$ is a constant independent of $\epsilon$ and $\tau$.
\end{proposition}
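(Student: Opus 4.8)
The plan is to close the estimate \eqref{3.3} by a weighted space--time energy method built on the convex-entropy structure that underlies the dissipation law \eqref{1.2}, tracking the relaxation parameter through the weights $\sqrt\tau$ and $\tau$ so that all constants stay independent of $\tau$ and $\epsilon$. Because $E(t)\le\delta$ with $\delta$ small, $v$ remains in a fixed neighbourhood of $1$, so that $v\ge c_0>0$ and $-p'(v)\ge c_0>0$; in particular the diagonal symmetrizer $A^0=\mathrm{diag}\{-p'(v),1,\tau/\mu\}$ from \eqref{2.4} is uniformly positive definite after the $\sqrt\tau$-rescaling of $S$. Every nonlinear term will be a product in which one factor is bounded in $L^\infty$ by $E^{1/2}(t)$ via Sobolev embedding, and hence absorbed into $E^{1/2}(t)\int_0^t\mathcal D$.

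First I would carry out the basic (zeroth-order) estimate. Multiplying $\eqref{2.1}_1,\eqref{2.1}_2,\eqref{2.1}_3$ by $-p(v)$ (i.e. $\Phi'(v)$ with $\Phi''=-p'>0$), $u$, and $S/\mu$ respectively and integrating over $\Omega=[0,1]$, the convective/pressure fluxes reduce to boundary values of $u$, which vanish by \eqref{2.3}; the damping term produces $\int_\Omega\frac{v}{\mu}S^2$, while the artificial viscosity $\tau\epsilon b S_x$ contributes the \emph{nonnegative} boundary term $\frac{\tau\epsilon}{2\mu}\bigl(bS^2\bigr)\big|_0^1=\frac{\tau\epsilon}{2\mu}(S(1)^2+S(0)^2)$ together with a bulk term $-\frac{\tau\epsilon}{\mu}\int_\Omega S^2$ that is absorbed by the damping once $\tau\epsilon$ is small. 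The crucial point for the higher-order steps is that, since $u\big|_{\partial\Omega}=0$ for all $t$, every time derivative $\partial_t^k u$ vanishes on $\partial\Omega$ as well; hence applying $\partial_t$ and $\tau\partial_t^2$ to \eqref{2.1} and repeating this computation generates \emph{no} boundary terms, and yields control of $\sup_s\|\partial_t^k(v,u,\sqrt\tau S)\|_{L^2}^2$ ($k=0,1$, with the $\tau^2$-weight for $k=2$) together with the dissipation $\int_0^t(\|S\|_{L^2}^2+\|S_t\|_{L^2}^2+\tau^2\|S_{tt}\|_{L^2}^2)$. To reach the full $S$-dissipation $\int_0^t(\|S\|_{H^2}^2+\|S_t\|_{H^1}^2)$ present in $\mathcal D$, I would run the same energy estimates on the derivatives $D^\alpha$ involving $\partial_x$: because $(vS)_x=v_xS+vS_x$ (and its higher analogues) the damping still produces $\int_0^t\|\partial^\alpha S\|_{L^2}^2$, the price being the appearance of boundary terms in $u_x,u_{xx},S_x$ at $\partial\Omega$, discussed below.

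Next I would generate the gradient dissipation for $(v,u)$, the directions invisible to the damping. The three equations give the pointwise relations $u_x=v_t$, $p'(v)v_x=S_x-u_t$, and $\mu u_x=vS+\tau(S_t+\epsilon bS_x)$; differentiating these expresses $v_t,u_x,v_{tt},v_{tx},u_{tx},u_{xx}$ and then $u_{tt}$ through the already-controlled temporal and $S$-derivatives, modulo quadratic remainders. The genuinely missing quantity is $v_x$, for which I introduce the interaction functional $\int_\Omega u\,v_x\,\dif x$: using $v_t=u_x$ and $u\big|_{\partial\Omega}=0$,
\begin{align*}
\frac{\dif}{\dif t}\int_\Omega u\,v_x\,\dif x+\int_\Omega u_x^2\,\dif x+\int_\Omega p'(v)v_x^2\,\dif x=\int_\Omega S_x v_x\,\dif x .
\end{align*}
Since $-p'(v)\ge c_0$, integrating in time and using Cauchy--Schwarz bounds $\int_0^t\|v_x\|_{L^2}^2$ by $E_0+E(t)$, $\int_0^t\|u_x\|_{L^2}^2$ and $\int_0^t\|S_x\|_{L^2}^2$; the first of these is controlled through $u_x=v_t$ and the $S$-relation, the last by the preceding step. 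Then $u_t=S_x-p'(v)v_x$ gives $\int_0^t\|u_t\|_{L^2}^2$, and a once-differentiated interaction functional supplies $\int_0^t\|v_{xx}\|_{L^2}^2$, after which the algebraic relations close the remaining second-order terms. Assembling all of this, the left-hand side reconstitutes $E(t)+\int_0^t\mathcal D$, and choosing $\tau\epsilon$ and $\delta$ small absorbs the error terms to give \eqref{3.3}.

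I expect the main obstacle to be precisely the boundary terms produced when $\partial_x$-energy estimates and the differentiated interaction functional are used at second order: integration by parts leaves boundary contributions involving $u_x,u_{xx},S_x$ at $\partial\Omega$, which are \emph{not} controlled by $u\big|_{\partial\Omega}=0$ — this is exactly the uniform characteristic-boundary degeneracy noted after \eqref{1.8}. The delicate task is to show that these high-order boundary terms are dominated, uniformly in $\tau$ and $\epsilon$, by the good artificial-viscosity boundary term $\frac{\tau\epsilon}{2\mu}(S(1)^2+S(0)^2)$ together with suitably chosen auxiliary boundary functionals, so that no derivatives are lost at the boundary and all constants remain independent of $\tau$ and $\epsilon$.
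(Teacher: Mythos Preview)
Your overall architecture---symmetrizer energy estimates at each order, then recovering the $(v,u)$-dissipation from the equations---matches the paper's, and your interaction functional $\int u\,v_x$ is a legitimate alternative to the paper's Lemma~3.6, which instead reads $u_x$ and $v_x$ directly off $\eqref{2.1}_3$ and $\eqref{2.1}_2$. Two corrections at lower order: the first-order $\partial_x$-estimate produces \emph{no} boundary term, since $u_t|_{\partial\Omega}=0$ and the momentum equation give $(p(v)_x-S_x)|_{\partial\Omega}=0$; and your differentiated interaction functional $\int u_x v_{xx}$ will itself generate a boundary term $u_xu_{xx}\big|_0^1$, so at second order it is cleaner to recover $v_{xx}$ algebraically from $p'(v)v_{xx}=S_{xx}-u_{tx}-p''(v)v_x^2$, as the paper does.

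The genuine gap is in your last paragraph. The only dangerous boundary term is $\int_0^t u_{tx}u_{xx}\big|_0^1\,\dif s$ arising in the second-order $\partial_x^2$-estimate (the paper's Lemma~3.9), and it carries \emph{no} small factor. Your proposed mechanism---absorption by the artificial-viscosity boundary term---cannot work uniformly: that good term is $\frac{\tau\epsilon}{2\mu}\bigl(S_{xx}^2(t,1)+S_{xx}^2(t,0)\bigr)$, i.e.\ of size $O(\tau\epsilon)$, while the bad term is $O(1)$. What the paper actually does (Lemma~3.10) is introduce a weighted multiplier $\bar b(x)=1-2x$, satisfying $\bar b(0)=1$, $\bar b(1)=-1$: testing the twice-differentiated $S$-equation against $\frac{v}{\mu}\bar b(x)u_{xx}$ (and similarly $\frac{v}{\mu}\bar b(x)u_{tx}$), the principal term $-\int \bar b\,u_{xxx}u_{xx}=\tfrac12(u_{xx}^2(t,1)+u_{xx}^2(t,0))-\int u_{xx}^2$ produces an $O(1)$ favourable boundary contribution that controls $\int_0^t(u_{xx}^2+u_{tx}^2)\big|_{\partial\Omega}$ directly. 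The $\tau\epsilon$-weighted $S_{xx}$ boundary term is used only to absorb a secondary remainder coming from the artificial-viscosity part of the equation itself. Without this weighted-multiplier idea the estimate does not close uniformly in $\epsilon$ and $\tau$, so this step needs to be made explicit in your argument.
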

In the following lemmas, we  assume $\delta$ is small enough such that
\begin{align}\label{3.4}
\frac{3}{4}\le \sup_{(t, x)\in[0,T]\times[0,1]}v(t,x) \le \frac{5}{4}.
\end{align}
Moreover, without loss of generality, we assume $\tau\le 1$ and $\epsilon<<1$ . $C$ denotes a universal constant which is independent of $\tau$ and $\epsilon$. 

First, we have the following $L^2$-estimates of solutions.
\begin{lemma}\label{Le3.2}
There exists some constant $C$ such that 
\begin{align}\label{3.5}
\frac{\dif}{\dif t} \int_0^1 \left( (v-1-h(v))+\frac{1}{2}u^2+\frac{\tau}{2\mu} S^2 \right) \dif x +\frac{1}{4\mu}\int_0^1  S^2 \dif x \le 0,
\end{align}
where $h(v)=\frac{1}{1-\gamma}(v^{1-\gamma}-1)$ satisfying $h^\prime(v)=p(v)$ and $h(1)=0$.
\end{lemma}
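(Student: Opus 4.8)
The plan is to run a standard weighted energy estimate directly on the approximate system \eqref{2.1}, treating the artificial convective term $\epsilon b(x)S_x$ as a perturbation of the dissipative structure inherited from \eqref{1.2}. Concretely, I would differentiate the energy density $\mathcal E := (v-1-h(v))+\tfrac12 u^2+\tfrac{\tau}{2\mu}S^2$ in time and substitute the three equations: the first gives $\frac{d}{dt}\int_0^1(v-1-h(v))\,dx=\int_0^1(1-p(v))u_x\,dx$ since $h'(v)=p(v)$; the second gives $\frac{d}{dt}\int_0^1\tfrac12 u^2\,dx=\int_0^1 u(S_x-p(v)_x)\,dx$; and from the third, writing $\tau S_t=\mu u_x-vS-\tau\epsilon b S_x$, gives $\frac{d}{dt}\int_0^1\tfrac{\tau}{2\mu}S^2\,dx=\int_0^1 Su_x\,dx-\tfrac1\mu\int_0^1 vS^2\,dx-\tfrac{\tau\epsilon}{\mu}\int_0^1 b S S_x\,dx$.

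The next step is to integrate by parts in the coupling terms and invoke the boundary condition \eqref{2.3}. Because $u$ vanishes at $x=0,1$, the boundary contributions from $\int_0^1 uS_x\,dx$ and $\int_0^1 up(v)_x\,dx$ disappear, leaving $\int_0^1 uS_x\,dx=-\int_0^1 u_x S\,dx$ and $-\int_0^1 u\,p(v)_x\,dx=\int_0^1 u_x p(v)\,dx$, while $\int_0^1(1-p(v))u_x\,dx=-\int_0^1 p(v)u_x\,dx$ since $\int_0^1 u_x\,dx=u(1)-u(0)=0$. Summing the three identities, the pressure terms $\pm\int_0^1 p(v)u_x\,dx$ cancel and the stress coupling terms $\pm\int_0^1 S u_x\,dx$ cancel, so the whole balance collapses to $\frac{d}{dt}\int_0^1\mathcal E\,dx=-\tfrac1\mu\int_0^1 vS^2\,dx-\tfrac{\tau\epsilon}{\mu}\int_0^1 b S S_x\,dx$. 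The first term is the genuine relaxation damping, and everything then hinges on controlling the last term.

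The main point, and the reason the weight $b(x)=2x-1$ is chosen, is the treatment of $\int_0^1 b S S_x\,dx=\tfrac12\int_0^1 b\,(S^2)_x\,dx$. Integrating by parts and using $b(1)=1$, $b(0)=-1$, $b'\equiv 2$ yields $\int_0^1 bSS_x\,dx=\tfrac12\big(S(1)^2+S(0)^2\big)-\int_0^1 S^2\,dx$. Hence the artificial term contributes $-\tfrac{\tau\epsilon}{2\mu}\big(S(1)^2+S(0)^2\big)+\tfrac{\tau\epsilon}{\mu}\int_0^1 S^2\,dx$: the boundary piece has a favorable sign (this is exactly why $b$ is built to point outward at both endpoints) and may be discarded, while the interior piece is a small multiple of $\int_0^1 S^2\,dx$. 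Absorbing it into the damping via $v\ge\tfrac34$ from \eqref{3.4} together with $\tau\le 1$ and $\epsilon\ll 1$ leaves a coefficient $-\tfrac1\mu\big(\tfrac34-\tau\epsilon\big)\le-\tfrac1{4\mu}$, which gives \eqref{3.5} after moving the dissipation to the left-hand side. I expect the only genuine subtlety to be checking that this non-conservative modification does not spoil the dissipation; all the interior cancellations are exact, and the specific form of $b$ hands us the boundary term with the correct sign, so the estimate should go through cleanly.
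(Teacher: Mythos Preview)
Your proposal is correct and follows essentially the same approach as the paper: multiply the momentum equation by $u$ and the constitutive equation by $\tfrac{1}{\mu}S$, integrate, cancel the cross terms via the boundary condition, and handle $\int_0^1 bSS_x\,\dif x$ by one integration by parts using $b(0)=-1$, $b(1)=1$, $b'\equiv 2$. The only cosmetic difference is that you track the $v-1$ contribution explicitly via $\int_0^1 u_x\,\dif x=0$, whereas the paper adds it in at the end from the mass equation; you also retain the factor $\tau$ in the artificial term (the paper drops it, harmlessly since $\tau\le 1$).
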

\begin{proof}
Multiplying the equations $\eqref{2.1}_2$ and $\eqref{2.1}_3$ by $u$ and $\frac{1}{\mu}S$, respectively,  and using the boundary condition \eqref{2.3}, we get the following equality,
\begin{align*}
\frac{\dif}{\dif t} \int_0^1 \left(\frac{1}{2}u^2+\frac{\tau}{2\mu} S^2-h(v)\right)\dif x +\epsilon \int_0^1 \frac{b(x)}{\mu} \left(\frac{1}{2}S^2\right)_x \dif x +\int_0^1 \frac{v}{\mu} S^2\dif x 
=0.
\end{align*}
Note that $b(0)=-1, b(1)=1$, we have
\begin{align*}
\int_0^1 \epsilon \frac{b(x)}{\mu} \left(\frac{1}{2}S^2\right)_x \dif x
=\epsilon \frac{b(x)}{2\mu} S^2\big|_0^1-\frac{\epsilon}{\mu} \int_0^1 S^2 \dif x \ge -\frac{\epsilon}{\mu} \int_0^1 S^2 \dif x.
\end{align*}
Choosing $\epsilon$ such that $\epsilon \le \frac{1}{4}$ and combining the mass equation $\eqref{2.1}_1$, we get
\begin{align*}
\frac{\dif}{\dif t} \int_0^1 \left( \frac{1}{2} u^2+\frac{\tau}{2\mu}S^2+v-1-h(v)\right)\dif x +\frac{1}{4\mu} \int_0^1 S^2 \dif x \le 0.
\end{align*}
This finish the proof of  Lemma \ref{Le3.2}.
\end{proof}
Now, we do the first-order estimates.  Taking derivative with respect to $x$ to the equations \eqref{2.1}, we get
\begin{align}\label{3.6}
\begin{cases}
v_{tx}=u_{xx},\\
u_{tx}+p(v)_{xx}=S_{xx},\\
\tau S_{tx}+\tau\epsilon (b(x) S_x)_x+(vS)_x=\mu u_{xx}.
\end{cases}
\end{align}
Multiplying the equation $\eqref{3.6}_2$by $u_x$ and integrating over $[0, 1]$ with respect to $x$, we get
\begin{align*}
\frac{\dif}{\dif t} \int_0^1 \frac{1}{2} u_x^2 \dif x +\int_0^1 (p(v)_{xx}-S_{xx})u_x \dif x=0.
\end{align*}
Note that $u_t(t,0)=u_t(t,1)=0$, by the momentum equation $\eqref{2.1}_2$, the term $p(v)_x-S_x$ vanishes on both sides of the boundary. Therefore, we have
\begin{align*} 
&\int_0^1 (p(v)_{xx}-S_{xx})u_x\dif x \\
&=(p(v)_x-S_x)u_x\big|_0^1-\int_0^1 (p(v)_x-S_x)u_{xx} \dif x\\
&=-\int_0^1 p(v)_x v_{tx}\dif x+\int_0^1 S_x u_{xx}\dif x\\
&=-\frac{\dif}{\dif t} \int_0^1 \frac{1}{2}p^\prime(v) v_x^2 \dif x+\frac{1}{2}\int_0^1 p^{\prime\prime}(v)v_t  v_x^2\dif x++\int_0^1 S_x u_{xx}\dif x.
\end{align*}
Multiplying the  equation  $\eqref{3.6}_3$ by $\frac{1}{\mu}S_x$ and integrating the result, we get
\begin{align*}
\frac{\dif}{\dif t} \int_0^1\frac{\tau}{2\mu} S_x^2\dif x+\frac{\tau\epsilon}{\mu} \int_0^1 (b(x)S_x)_x S_x\dif x+\frac{1}{\mu}\int_0^1 (vS)_x S_x \dif x =\int_0^1 u_{xx}S_x\dif x.
\end{align*}
Note that $b^\prime(x)=2>0$, we have
\begin{align*}
\frac{\tau\epsilon}{\mu} \int_0^1 (b(x)S_x)_x S_x\dif x
=\frac{\tau\epsilon}{\mu} \int_0^1 \left( b^\prime S_x^2+b\left(\frac{1}{2} S_x^2 \right)_x\right)\dif x
\ge -\frac{\tau\epsilon}{\mu} \int_0^1 S_x^2 \dif x
\end{align*}
and
\begin{align*}
\frac{1}{\mu} \int_0^1 (vS)_x S_x \dif x \ge \frac{3}{4\mu} \int_0^1 S_x^2 \dif x -E^\frac{1}{2}(t) \mathcal D(t).
\end{align*}
Combining the above estimates, we derive the following lemma.
\begin{lemma}\label{Le3.3}
There exists a constant $C$ such that
\begin{align}\label{3.7}
\frac{\dif}{\dif t} \int_0^1 \left( \frac{1}{2} u_x^2-\frac{1}{2}p^\prime(v) v_x^2+\frac{\tau}{2\mu} S_x^2 \right)\dif x +\frac{1}{4\mu}\int_0^1 S_x^2 \dif x 
\le C E^\frac{1}{2}(t) \mathcal D(t).
\end{align}
\end{lemma}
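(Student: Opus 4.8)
The plan is to obtain \eqref{3.7} as the first spatial-derivative analogue of the basic $L^2$ estimate in Lemma \ref{Le3.2}, working with the $x$-differentiated system \eqref{3.6} and the two natural multipliers $u_x$ and $\frac{1}{\mu}S_x$, chosen so that the cross terms coupling $u$ and $S$ will cancel.

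First I would multiply $\eqref{3.6}_2$ by $u_x$ and integrate over $[0,1]$, which produces $\frac{\dif}{\dif t}\int_0^1\frac12 u_x^2\,\dif x$ together with $\int_0^1(p(v)_{xx}-S_{xx})u_x\,\dif x$. The decisive step is the handling of this second integral: integrating by parts transfers one derivative and leaves the boundary term $(p(v)_x-S_x)u_x\big|_0^1$, which I claim vanishes. This is exactly where the structure of the problem is used: since $u|_{\partial\Omega}=0$ holds for all $t$, we have $u_t|_{\partial\Omega}=0$, and then $\eqref{2.1}_2$ forces $(p(v)_x-S_x)|_{\partial\Omega}=0$. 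Using $v_{tx}=u_{xx}$ from $\eqref{3.6}_1$, the remaining interior integral becomes $-\frac{\dif}{\dif t}\int_0^1\frac12 p'(v)v_x^2\,\dif x$ plus the cubic remainder $\frac12\int_0^1 p''(v)v_t v_x^2\,\dif x$ and the coupling term $\int_0^1 S_x u_{xx}\,\dif x$.

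Next I would multiply $\eqref{3.6}_3$ by $\frac{1}{\mu}S_x$ and integrate, obtaining $\frac{\dif}{\dif t}\int_0^1\frac{\tau}{2\mu}S_x^2\,\dif x$, the artificial-viscosity contribution $\frac{\tau\epsilon}{\mu}\int_0^1(b(x)S_x)_x S_x\,\dif x$, the dissipation-generating term $\frac{1}{\mu}\int_0^1(vS)_x S_x\,\dif x$, and the coupling term $\int_0^1 u_{xx}S_x\,\dif x$ on the right. Adding the two identities, the two copies of $\int_0^1 S_x u_{xx}\,\dif x$ cancel exactly, which is precisely why this pair of multipliers is chosen. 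It then remains to sign the leftover terms: the artificial-viscosity term is bounded below by $-\frac{\tau\epsilon}{\mu}\int_0^1 S_x^2\,\dif x$ using $b'(x)=2>0$ and $b(0)=-1,\,b(1)=1$, so that the boundary terms produced by the second integration by parts are nonnegative; the term $\frac{1}{\mu}\int_0^1(vS)_x S_x\,\dif x=\frac{1}{\mu}\int_0^1(vS_x+v_xS)S_x\,\dif x$ is bounded below by $\frac{3}{4\mu}\int_0^1 S_x^2\,\dif x-E^{\frac12}(t)\mathcal D(t)$, with the main part coming from $v\ge\frac34$ in \eqref{3.4} and the cubic cross term $\frac{1}{\mu}\int_0^1 v_x S S_x\,\dif x$ absorbed into $E^{\frac12}\mathcal D$; and the remainder $\frac12\int_0^1 p''(v)v_t v_x^2\,\dif x$ is likewise cubic and controlled by $CE^{\frac12}(t)\mathcal D(t)$. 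Finally, taking $\tau\le1$ and $\epsilon\le\frac14$ leaves at least $\frac{1}{4\mu}\int_0^1 S_x^2\,\dif x$ on the dissipation side, which is \eqref{3.7}.

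The main obstacle I anticipate is the boundary bookkeeping rather than the interior estimates. One must carefully exploit that it is $u_t$, not $u_x$, that vanishes on $\partial\Omega$ in order to kill $(p(v)_x-S_x)u_x\big|_0^1$, and one must check that the boundary contributions coming from the $\epsilon$-regularization carry the correct sign; these are exactly the places where the characteristic versus non-characteristic nature of the boundary enters. I would also record that the energy density $\frac12 u_x^2-\frac12 p'(v)v_x^2+\frac{\tau}{2\mu}S_x^2$ is positive definite, since $p'(v)<0$ for the $\gamma$-law, so that \eqref{3.7} genuinely controls $\|(u_x,v_x,\sqrt{\tau}S_x)\|_{L^2}^2$.
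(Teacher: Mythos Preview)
Your proposal is correct and follows essentially the same approach as the paper: multiply $\eqref{3.6}_2$ by $u_x$ and $\eqref{3.6}_3$ by $\frac{1}{\mu}S_x$, use $u_t|_{\partial\Omega}=0$ together with $\eqref{2.1}_2$ to kill the boundary term $(p(v)_x-S_x)u_x|_0^1$, cancel the cross terms $\int_0^1 S_x u_{xx}\,\dif x$, sign the $\epsilon$-term via $b'=2$ and $b(0)=-1,\ b(1)=1$, and absorb the cubic remainders into $CE^{1/2}(t)\mathcal D(t)$. The multipliers, boundary argument, lower bounds, and final constants all match the paper's derivation preceding the lemma.
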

In quite a similar way, we can show first order estimates with respect to $t$ as follows.
\begin{lemma}\label{Le3.4}
There exists  a constant $C$ such that
\begin{align}\label{3.8}
\frac{\dif}{\dif t} \int_0^1 \left( \frac{1}{2} u_t^2-\frac{1}{2}p^\prime(v) v_t^2+\frac{\tau}{2\mu} S_t^2 \right)\dif x +\frac{1}{4\mu}\int_0^1 S_t^2 \dif x 
\le C E^\frac{1}{2}(t) \mathcal D(t) .
\end{align}
\end{lemma}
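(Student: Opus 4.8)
The plan is to mirror the proof of Lemma \ref{Le3.3}, replacing the spatial derivative $\partial_x$ by the time derivative $\partial_t$ throughout. First I would differentiate system \eqref{2.1} with respect to $t$, obtaining
\begin{align*}
v_{tt}=u_{tx},\quad u_{tt}+p(v)_{tx}=S_{tx},\quad \tau S_{tt}+\tau\epsilon b(x)S_{tx}+(vS)_t=\mu u_{tx}.
\end{align*}
Multiplying the second equation by $u_t$ and the third by $\frac{1}{\mu}S_t$ and integrating over $[0,1]$ produces two differential identities whose principal parts are $\frac{\dif}{\dif t}\int_0^1\frac12 u_t^2\,\dif x$ and $\frac{\dif}{\dif t}\int_0^1\frac{\tau}{2\mu}S_t^2\,\dif x$, together with a cross term $\int_0^1 S_t u_{tx}\,\dif x$ which appears with opposite signs in the two identities and therefore cancels upon adding them. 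This cancellation is the essential structural feature of the system and is what makes the hyperbolic estimate close.

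The delicate point is the treatment of the boundary terms produced by integration by parts, and here the time-derivative case is actually cleaner than the spatial one. In the term $\int_0^1(p(v)_{tx}-S_{tx})u_t\,\dif x$, integration by parts yields the boundary contribution $(p(v)_t-S_t)u_t\big|_0^1$; differentiating the boundary condition \eqref{2.3} in time gives $u_t(t,0)=u_t(t,1)=0$, so this term vanishes directly, whereas in Lemma \ref{Le3.3} one first had to invoke the momentum equation to see that $p(v)_x-S_x=-u_t=0$ on $\partial\Omega$. The remaining integral reorganizes as $-\int_0^1 p'(v)v_t v_{tt}\,\dif x+\int_0^1 S_t u_{tx}\,\dif x$, where I use $u_{tx}=v_{tt}$; the first piece gives $-\frac{\dif}{\dif t}\int_0^1\frac12 p'(v)v_t^2\,\dif x$ plus a cubic remainder $\frac12\int_0^1 p''(v)v_t^3\,\dif x$.

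For the stress identity I would treat the relaxation term as in Lemma \ref{Le3.3}: writing $\frac{\tau\epsilon}{\mu}\int_0^1 b(x)S_{tx}S_t\,\dif x=\frac{\tau\epsilon}{\mu}\int_0^1 b(x)\big(\tfrac12 S_t^2\big)_x\,\dif x$ and integrating by parts gives the favorably signed boundary term $\frac{\tau\epsilon}{2\mu}\big(S_t(1)^2+S_t(0)^2\big)\ge 0$, which I simply discard, together with $-\frac{\tau\epsilon}{\mu}\int_0^1 S_t^2\,\dif x$; hence this contribution is bounded below by $-\frac{\tau\epsilon}{\mu}\int_0^1 S_t^2\,\dif x$. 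The damping term $\frac1\mu\int_0^1(vS)_t S_t\,\dif x=\frac1\mu\int_0^1 vS_t^2\,\dif x+\frac1\mu\int_0^1 v_t S S_t\,\dif x$ splits into a good part, bounded below by $\frac{3}{4\mu}\int_0^1 S_t^2\,\dif x$ through \eqref{3.4}, and a cubic remainder.

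Finally I would add the two identities, so that the $\int_0^1 S_t u_{tx}\,\dif x$ terms cancel, and absorb all cubic remainders into $CE^{\frac12}(t)\mathcal D(t)$; the representative bound is $\int_0^1 v_t S S_t\,\dif x\le\|v_t\|_{L^\infty}\|S\|_{L^2}\|S_t\|_{L^2}\le CE^{\frac12}(t)\mathcal D(t)$, using $\|v_t\|_{L^\infty}\le C\|v_t\|_{H^1}\le CE^{\frac12}(t)$ and the fact that $\|S\|_{L^2}$ and $\|S_t\|_{L^2}$ are controlled by $\mathcal D(t)$, and similarly $\int_0^1 p''(v)v_t^3\,\dif x\le C\|v_t\|_{L^\infty}\|v_t\|_{L^2}^2\le CE^{\frac12}(t)\mathcal D(t)$. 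Collecting the lower bounds and using $\frac{3}{4\mu}-\frac{\tau\epsilon}{\mu}\ge\frac{1}{4\mu}$ for $\epsilon\le\frac14$, $\tau\le1$ produces the coefficient $\frac{1}{4\mu}$ on the dissipation $\int_0^1 S_t^2\,\dif x$, which is precisely \eqref{3.8}. The main obstacle, beyond the bookkeeping, is verifying that every boundary contribution either vanishes (from $u_t|_{\partial\Omega}=0$) or carries a favorable sign (from $b'>0$ and $b(0)<0<b(1)$), since the absence of any boundary condition on $S$ means one cannot control $S_t$ on $\partial\Omega$ and must rely entirely on these sign structures.
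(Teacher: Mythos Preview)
Your proposal is correct and follows precisely the approach the paper intends: the paper itself provides no detailed proof of Lemma~\ref{Le3.4}, stating only that it is obtained ``in quite a similar way'' to Lemma~\ref{Le3.3}, and your argument is exactly that similar way. Your observation that the boundary term $(p(v)_t-S_t)u_t\big|_0^1$ vanishes directly from $u_t|_{\partial\Omega}=0$, without needing to invoke the momentum equation as in Lemma~\ref{Le3.3}, is a nice clarification, and your handling of the $\epsilon b(x)$ term and the cubic remainders is entirely in line with the paper's computations.
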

Combining the above lemmas, we conclude that 
\begin{lemma}\label{Le3.5}
There exists a constant $C$ such that
\begin{align}\label{3.9}
&\SUM \alpha 0 1  \| D^\alpha(v-1, u, \sqrt{\tau} S\|_{L^2}^2 +\int_0^t \SUM \alpha 0 1 \|D^\alpha  S\|_{L^2}^2 \dif t \le C(E_0+E^\frac{1}{2}(t) \int_0^t \mathcal D (s)\dif s).
\end{align}
\end{lemma}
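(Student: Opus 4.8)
The plan is to prove Lemma \ref{Le3.5} simply by adding the three preceding differential inequalities together and integrating in time. The previous lemmas have already done the analytic heavy lifting; what remains is organizational. From Lemma \ref{Le3.2} I have control of the zeroth-order quantity $(v-1-h(v)) + \frac12 u^2 + \frac{\tau}{2\mu} S^2$ together with the dissipation $\frac{1}{4\mu}\int_0^1 S^2$. From Lemmas \ref{Le3.3} and \ref{Le3.4} I have the analogous inequalities for $\partial_x$ and $\partial_t$ derivatives, producing the dissipation terms $\frac{1}{4\mu}\int_0^1 S_x^2$ and $\frac{1}{4\mu}\int_0^1 S_t^2$. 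Summing these three inequalities gives a single differential inequality whose left-hand side controls (modulo equivalence of norms, see below) the quantity $\SUM \alpha 0 1 \|D^\alpha(v-1, u, \sqrt{\tau} S)\|_{L^2}^2$ under the time derivative and $\SUM \alpha 0 1 \|D^\alpha S\|_{L^2}^2$ as dissipation, with right-hand side bounded by $C E^{1/2}(t)\mathcal D(t)$.

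The key step I would carry out carefully is verifying that the summed energy functional is \emph{equivalent} to the $L^2$-norm squared appearing on the left of \eqref{3.9}. The functionals in Lemmas \ref{Le3.3}--\ref{Le3.4} contain the terms $-\frac12 p'(v) v_x^2$ and $-\frac12 p'(v) v_t^2$; since $p(v) = a v^{-\gamma}$ gives $p'(v) = -a\gamma v^{-\gamma-1} < 0$, these terms are in fact \emph{positive}, and under the bounds \eqref{3.4} on $v$ they are bounded above and below by fixed multiples of $\int_0^1 v_x^2$ and $\int_0^1 v_t^2$. Thus the integrand
\begin{align*}
\SUM \alpha 0 1 \tfrac12 (D^\alpha u)^2 - \tfrac12 p'(v)(D^\alpha v)^2 + \tfrac{\tau}{2\mu}(D^\alpha S)^2 + \big(v-1-h(v)\big)
\end{align*}
is comparable to $\SUM \alpha 0 1 \|D^\alpha(v-1, u, \sqrt{\tau}S)\|_{L^2}^2$, the comparison constants depending only on the uniform bounds for $v$ and hence being independent of $\tau$ and $\epsilon$. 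For this I also note that $v-1-h(v)$ is nonnegative and comparable to $(v-1)^2$ near $v=1$ by Taylor expansion, since $h(1)=0$, $h'(1)=p(1)=a$, so the zeroth-order density term behaves like $\tfrac12(1-p'(1))(v-1)^2 + \cdots$.

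Having established this equivalence, I would integrate the summed inequality from $0$ to $t$. The time derivative integrates to the difference of the functional at $t$ and at $0$; the value at $t$ bounds (from below) the first group on the left of \eqref{3.9} up to the universal constant, while the value at $0$ is bounded above by $C E_0$ by definition of $E_0$. The dissipation terms integrate to $\int_0^t \SUM \alpha 0 1 \|D^\alpha S\|_{L^2}^2 \dif s$, and the accumulated right-hand side becomes $C E^{1/2}(t)\int_0^t \mathcal D(s)\dif s$ after using $E^{1/2}(s) \le E^{1/2}(t)$ for $s \le t$ (since $E$ is a supremum, hence nondecreasing in $t$). Combining these yields \eqref{3.9}.

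\emph{The main obstacle} is not any single estimate but rather keeping the constants genuinely uniform in $\tau$ and $\epsilon$. The delicate points are that the $\tau$-weighted stress terms $\frac{\tau}{2\mu}(D^\alpha S)^2$ must be absorbed into the $\sqrt{\tau}S$ norms on the left exactly, that the sign condition $p'(v)<0$ is what makes the density terms coercive, and that every appearance of the small-data factor is routed through $E^{1/2}(t)$ so the cubic remainder $C E^{1/2}(t)\mathcal D(t)$ can later be absorbed by the dissipation once $\delta$ is taken small. All of these are inherited cleanly from the three component lemmas, so the proof of Lemma \ref{Le3.5} itself is essentially a bookkeeping argument.
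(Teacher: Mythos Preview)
Your proposal is correct and follows essentially the same approach as the paper, which states the lemma as an immediate consequence of combining Lemmas \ref{Le3.2}--\ref{Le3.4}; you have simply spelled out the norm-equivalence and integration steps that the paper leaves implicit. One small slip: the Taylor expansion of $v-1-h(v)$ at $v=1$ has leading term $\tfrac{-p'(1)}{2}(v-1)^2$ (assuming the normalization $p(1)=1$ so that the linear term vanishes), not $\tfrac12(1-p'(1))(v-1)^2$; this does not affect your argument since both expressions are positive.
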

The next lemma show the dissipative estimates of $D(u, v)$.
\begin{lemma}\label{Le3.6}
There exists a constant $C$ such that
\begin{align}\label{3.10}
\int_0^t \int_0^1|D(u,v)|^2 \dif x \dif t \le C(E_0+E^\frac{1}{2}(t) \int_0^t \mathcal D (s)\dif s).
\end{align}
\end{lemma}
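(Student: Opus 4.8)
The quantity $D(u,v)$ consists of $u_t,\,u_x,\,v_t,\,v_x$, and since the mass equation $\eqref{2.1}_1$ gives $v_t=u_x$, it suffices to bound the space-time integrals of $u_x^2$, $v_x^2$ and $u_t^2$. My plan is to obtain each of these from quantities already controlled in Lemma \ref{Le3.5}, namely the time-integrated dissipation of $S^2,S_t^2,S_x^2$ and the pointwise-in-time $L^2$ bounds on $u$ and $v_x$.

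For $u_x$ (hence $v_t$) I would simply read off the constitutive relation $\eqref{2.1}_3$ in the form $\mu u_x = vS+\tau S_t+\tau\epsilon b(x)S_x$. Using $\tau\le 1$, $\epsilon\ll 1$, the boundedness of $b$ and of $v$ from \eqref{3.4}, this yields $\int_0^t\int_0^1 u_x^2\,\dif x\,\dif s\le C\int_0^t\int_0^1 (S^2+S_t^2+S_x^2)\,\dif x\,\dif s$, whose right-hand side is exactly the first-order dissipation of $S$ controlled by Lemma \ref{Le3.5}. Hence $\int_0^t\int_0^1 (u_x^2+v_t^2)\,\dif x\,\dif s\le C\big(E_0+E^{\frac12}(t)\int_0^t\mathcal D\,\dif s\big)$.

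The essential step is the dissipative estimate for $v_x$, which I would get by testing the momentum equation $\eqref{2.1}_2$ against $v_x$. Since $p^\prime(v)=-a\gamma v^{-\gamma-1}\le -c_0<0$ on the region \eqref{3.4}, this gives
\[
\int_0^1 (-p^\prime(v))\,v_x^2\,\dif x=\int_0^1 u_t v_x\,\dif x-\int_0^1 S_x v_x\,\dif x .
\]
The key is the first term: writing $u_t v_x=(u v_x)_t-u\,v_{tx}$ and using $v_{tx}=u_{xx}$ together with the boundary condition $u|_{\partial\Omega}=0$, a single integration by parts produces no boundary term and yields
\[
\int_0^1 u_t v_x\,\dif x=\frac{\dif}{\dif t}\int_0^1 u v_x\,\dif x+\int_0^1 u_x^2\,\dif x .
\]
Integrating the resulting identity over $[0,t]$, the boundary contribution $\int_0^1 u v_x\,\dif x\big|_0^t$ is bounded by $\|u\|_{L^2}\|v_x\|_{L^2}$ at the times $t$ and $0$, both controlled by Lemma \ref{Le3.5} and $E_0$; the term $\int_0^t\int_0^1 u_x^2$ was estimated in the previous step; and $\int_0^t\int_0^1 S_x v_x$ is split by Young's inequality into a small multiple of $\int_0^t\int_0^1 v_x^2$, absorbed into the left using $-p^\prime(v)\ge c_0$, plus $C\int_0^t\int_0^1 S_x^2$, again controlled by Lemma \ref{Le3.5}. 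This gives $\int_0^t\int_0^1 v_x^2\,\dif x\,\dif s\le C\big(E_0+E^{\frac12}(t)\int_0^t\mathcal D\,\dif s\big)$.

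Finally, $u_t$ follows algebraically: the momentum equation gives $u_t=S_x-p^\prime(v)v_x$, so $\int_0^t\int_0^1 u_t^2\le C\int_0^t\int_0^1 (S_x^2+v_x^2)$, bounded by the previous estimates. Collecting the three pieces yields \eqref{3.10}. I expect the main obstacle to be the $v_x$ estimate: one must identify the correct multiplier ($v_x$ itself), verify that the single integration by parts generates no boundary term thanks to $u|_{\partial\Omega}=0$, and ensure the cross term $\int_0^t\int_0^1 S_x v_x$ is \emph{absorbed} into the good term $\int_0^t\int_0^1(-p^\prime(v))v_x^2$ rather than merely bounded.
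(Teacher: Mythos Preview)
Your proof is correct and follows the same overall strategy as the paper: extract $u_x$ (and hence $v_t$) from the constitutive relation $\eqref{2.1}_3$ using Lemma \ref{Le3.5}, then use the momentum equation $\eqref{2.1}_2$ twice---once via a multiplier identity and once algebraically---to recover the remaining two quantities $u_t$ and $v_x$. The only difference is the order in which the last two are obtained. The paper multiplies $\eqref{2.1}_2$ by $u_t$ to get $\int_0^t\!\int_0^1 u_t^2$ directly (handling $\int p_x(v)u_t$ by integrating in $t$ and then in $x$, using $u|_{\partial\Omega}=0$), and afterwards reads off $v_x$ algebraically from $p'(v)v_x=S_x-u_t$; you instead multiply by $v_x$ first, exploit $v_{tx}=u_{xx}$ and the same boundary condition to get $\int_0^t\!\int_0^1 v_x^2$, and then read off $u_t$ algebraically from $u_t=S_x-p'(v)v_x$. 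The two routes are dual and of comparable length; yours has the mild advantage that the absorption of the cross term $\int S_x v_x$ into the coercive term $\int(-p'(v))v_x^2$ is slightly more direct.
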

\begin{proof}
From the equations $\eqref{2.1}_3$ and $\eqref{2.1}_1$,  using Lemma \ref{Le3.5}, we get immediately 
\begin{align}\label{3.11}
\int_0^t \int_0^1 (u_x^2+v_t^2)\dif x \dif t \le C(E_0+E^\frac{1}{2}(t) \int_0^t \mathcal D (s)\dif s).
\end{align}
On the other hand, by multiplying the equation  $\eqref{2.1}_2$ by $u_t$, it yields
\begin{align*}
&\int_0^t \int_0^1 u_t^2 \dif x \dif t =-\int_0^t\int_0^1 p_x(v) u_t \dif x\dif t +\int_0^t\int_0^1 S_x u_t \dif x \dif t\\
&=\int_0^t \frac{\dif}{\dif t} \int_0^1 p_x(v) u\dif x \dif t +\int_0^t \int_0^1p_{t}(v) u_x\dif x\dif t+\int_0^t\int_0^1 S_x u_t \dif x \dif t\\
&\le \frac{1}{2} \int_0^t \int_0^1 u_t^2 \dif x \dif t + C\int_0^1 (v_x^2+u^2)\dif x +C\int_0^t\int_0^1( v_t^2+u_x^2+S_x^2)\dif x \dif t
\end{align*}
which gives
\begin{align}\label{3.12}
\int_0^t \int_0^1 u_t^2 \dif x \dif t \le C(E_0+E^\frac{1}{2}(t) \int_0^t \mathcal D (s)\dif s).
\end{align}
By equation $\eqref{2.1}_2$, one also get
\begin{align}\label{3.13}
\int_0^t \int_0^1 v_x^2 \dif x \dif t \le C(E_0+E^\frac{1}{2}(t) \int_0^t \mathcal D (s)\dif s).
\end{align}
Thus, \eqref{3.11}-\eqref{3.13} implies \eqref{3.10} immediately. 
\end{proof}
Now, we are ready to give the second derivative estimates of solutions. 
\begin{lemma} \label{Le3.7}
There exists a constant $C$ such that
\begin{align}
&\int_0^1 \tau^2(-\frac{1}{2} p^\prime(v)v_{tt}^2+\frac{1}{2}u_{tt}^2+\frac{\tau}{2\mu}S_{tt}^2 )\dif x +\frac{\tau^2}{4\mu}\int_0^t\int_0^1 S_{tt}^2 \dif x \le C\left(E_0+ E^\frac{1}{2}(t) \int_0^t \mathcal D(s)\dif s \right). \label{3.14}
\end{align}
\end{lemma}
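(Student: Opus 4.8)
The plan is to run the energy method of Lemmas \ref{Le3.3}--\ref{Le3.4} one order higher in $t$, with the weight $\tau^2$ forced by the definition \eqref{3.1} of $E(t)$. First I differentiate \eqref{2.1} twice in $t$ (note $b(x)$ is $t$-independent),
\begin{align*}
v_{ttt}=u_{ttx},\qquad u_{ttt}+(\partial_t^2 p(v))_x=S_{ttx},\qquad \tau S_{ttt}+\tau\epsilon b(x)S_{ttx}+\partial_t^2(vS)=\mu u_{ttx},
\end{align*}
then multiply the second equation by $\tau^2 u_{tt}$ and the third by $\frac{\tau^2}{\mu}S_{tt}$, integrate over $[0,1]$, and add. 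Differentiating \eqref{2.3} in time gives $u_{tt}|_{\partial\Omega}=0$, so the boundary terms from integrating $(\partial_t^2 p(v))_x u_{tt}$ and $S_{ttx}u_{tt}$ by parts vanish, and, exactly as in the first order case, the coupling terms $\mp\tau^2\int_0^1 u_{ttx}S_{tt}\dif x$ produced by the two equations cancel. With $u_{ttx}=v_{ttt}$ and $\int_0^1 p'(v)v_{tt}v_{ttt}\dif x=\frac{\dif}{\dif t}\int_0^1\frac12 p'(v)v_{tt}^2\dif x-\frac12\int_0^1 p''(v)v_t v_{tt}^2\dif x$, this yields the exact time derivative of $\int_0^1\tau^2\big(-\frac12 p'(v)v_{tt}^2+\frac12 u_{tt}^2+\frac{\tau}{2\mu}S_{tt}^2\big)\dif x$, which is the nonnegative energy in \eqref{3.14} since $p'(v)<0$.

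The dissipation comes from the leading factor of $\partial_t^2(vS)=vS_{tt}+2v_tS_t+v_{tt}S$: by \eqref{3.4}, $\frac{\tau^2}{\mu}\int_0^1 vS_{tt}^2\dif x\ge\frac{3\tau^2}{4\mu}\int_0^1 S_{tt}^2\dif x$. The artificial viscosity is again favorable, since $b'=2>0$ and $b(0)=-1$, $b(1)=1$ give
\begin{align*}
\frac{\tau^3\epsilon}{\mu}\int_0^1 b(x)S_{ttx}S_{tt}\dif x=\frac{\tau^3\epsilon}{2\mu}\,b\,S_{tt}^2\Big|_0^1-\frac{\tau^3\epsilon}{\mu}\int_0^1 S_{tt}^2\dif x\ge-\frac{\tau^3\epsilon}{\mu}\int_0^1 S_{tt}^2\dif x,
\end{align*}
so for $\tau\epsilon$ small it is absorbed, leaving the clean dissipation $\frac{\tau^2}{4\mu}\int_0^1 S_{tt}^2\dif x$. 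The remaining products $v_tS_tS_{tt}$, $v_{tt}SS_{tt}$ and the cubic term $p''(v)v_t v_{tt}^2$ are error terms bounded by $CE^{\frac12}(t)\mathcal D(t)$ once the $\tau$-weights are split so every factor lands in $E$ or $\mathcal D$; this is exactly why $\tau^2$ is the correct weight, as in
\begin{align*}
\tau^2\int_0^1 v_{tt}SS_{tt}\dif x\le\|\sqrt{\tau}S\|_{L^\infty}\,\|v_{tt}\|_{L^2}\,\big(\tau^{\frac32}\|S_{tt}\|_{L^2}\big)\le CE^{\frac12}(t)\mathcal D(t),
\end{align*}
using $\|\sqrt{\tau}S\|_{L^\infty}\le CE^{\frac12}$, $\|v_{tt}\|_{L^2}\le\mathcal D^{\frac12}$ and $\tau^{\frac32}\|S_{tt}\|_{L^2}\le\tau\|S_{tt}\|_{L^2}\le\mathcal D^{\frac12}$.

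The one genuinely new difficulty — and what I expect to be the main obstacle — is the term $-\tau^2\int_0^1 p''(v)v_t^2 v_{ttt}\dif x$ coming from $\partial_t^2 p(v)=p'(v)v_{tt}+p''(v)v_t^2$, since $v_{ttt}$ sits one order above everything controlled by $E$ and $\mathcal D$. I would remove this apparent loss of a derivative using the structure of the continuity equation: by $\eqref{2.1}_1$ one has $v_{ttt}=u_{ttx}$, so integrating by parts in $x$ and using $u_{tt}|_{\partial\Omega}=0$,
\begin{align*}
-\tau^2\int_0^1 p''(v)v_t^2 v_{ttt}\dif x=-\tau^2\int_0^1 p''(v)v_t^2 u_{ttx}\dif x=\tau^2\int_0^1\big(p'''(v)v_x v_t^2+2p''(v)v_t v_{tx}\big)u_{tt}\dif x,
\end{align*}
which contains no third-order derivative; both terms are of degree at least three and are bounded by $CE^{\frac12}(t)\mathcal D(t)$ on distributing the weight as $\tau^2\|u_{tt}\|_{L^2}\le\mathcal D^{\frac12}$ against an $L^\infty$-coefficient and an $L^2$-factor from $\mathcal D$. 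It is worth stressing that no uncontrolled boundary term in $x$ arises at any stage: each integration by parts either annihilates the boundary value through $u_{tt}|_{\partial\Omega}=0$ or, as for the artificial-viscosity term, leaves a term of favorable sign. Finally, integrating the assembled identity over $[0,t]$, using $-p'(v)\ge c>0$ and \eqref{3.4} so the left side controls the energy and dissipation of \eqref{3.14}, bounding the initial energy by $CE_0$ via Assumption \ref{assu}(4), and gathering all error terms into $CE^{\frac12}(t)\int_0^t\mathcal D(s)\dif s$, yields \eqref{3.14}.
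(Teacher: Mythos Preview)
Your proposal is correct and follows essentially the same approach as the paper: differentiate \eqref{2.1} twice in $t$, test the momentum equation against $u_{tt}$ and the constitutive equation against $S_{tt}$, use $u_{tt}|_{\partial\Omega}=0$ to kill the boundary terms, cancel the coupling $\pm\int u_{ttx}S_{tt}$, handle the artificial viscosity by its favorable sign, and remove the apparent third derivative in $p''(v)v_t^2 v_{ttt}$ via $v_{ttt}=u_{ttx}$ and integration by parts in $x$. The only cosmetic differences are that the paper first divides $\eqref{2.1}_3$ by $v$ before differentiating (so it tests against $\tfrac{v}{\mu}S_{tt}$ and its lower-order errors look slightly different) and carries the estimates without the $\tau^2$ weight---allowing intermediate errors of size $\tfrac{C}{\tau}E^{1/2}\mathcal D$---and multiplies through by $\tau^2$ only at the end, whereas you carry the weight from the start; neither changes the substance.
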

\begin{proof}
Dividing the the equation $\eqref{2.1}_3$ by $v$, and then taking derivative with respect to $t$ twice, we get
\begin{align}\label{3.15}
\begin{cases}
v_{ttt}=u_{xtt},\\
u_{ttt}+p(v)_{xtt}=S_{xtt},\\
\tau \left(\frac{S_t}{v}\right)_{tt}+\left(\tau \epsilon b(x)\frac{S_x}{v} \right)_{tt}+S_{tt}=\left(\frac{\mu}{v} u_x\right)_{tt}.
\end{cases}
\end{align}
Multiplying the  equations  $\eqref{3.15}_1$ and $\eqref{3.15}_2$by $u_{tt}$ and $-p^\prime(v)v_{tt}$, respectively, and integrating the result, we have
\begin{align}\label{3.16}
\frac{\dif}{\dif t} \int_0^1 \left(\frac{1}{2} u_{tt}^2-\frac{1}{2} p^\prime(v)v_{tt}^2 \right)\dif x+\int_0^1 S_{tt} u_{ttx} \dif x
=\int_0^1 \left(p(v)_{tt} u_{ttx} -p^\prime(v) v_{tt} u_{ttx}-\frac{1}{2} p^{\prime\prime}(v) v_t v_{tt}^2\right) \dif x,
\end{align}
where
\begin{align*}
&\int_0^1 \left(p(v)_{tt} u_{ttx} -p^\prime(v) v_{tt} u_{ttx} -\frac{1}{2} p^{\prime\prime}(v) v_t v_{tt}^2\right)\dif x=\int_0^1 \left(p^{\prime\prime}(v) v_t^2 u_{ttx}-\frac{1}{2} p^{\prime\prime}(v) v_t v_{tt}^2\right) \dif x\\
&=-\int_0^1 p^{\prime\prime\prime}(v)v_xv_t^2 u_{tt} \dif x-\int_0^1 2p^{\prime\prime}(v) v_t v_{tx} u_{tt}\dif x -\int_0^1 \frac{1}{2} p^{\prime\prime}(v) v_t v_{tt}^2\dif x \le C E^\frac{1}{2}(t) \mathcal D(t).
\end{align*}
Note that the above estimate (also in what follows) is formal because the regularity of the local solution is insufficient to validate several of the steps. In particular, we use the high-order compatibility condition $u_{tt}(t,0)=u_{tt}(t,1)=0$. However, a standard density argument will eliminate the needs for the extra regularity of local solutions (cf. \cite{SCH86}).

Next, multiplying the equation $\eqref{3.15}_3$ by $\frac{v}{\mu} S_{tt}$, and integrating the result, we get
\begin{align*}
\int_0^1 \tau \left(\frac{S_t}{v}\right)_{tt} \frac{v}{\mu} S_{tt} \dif x+\int_0^1\left( \tau\epsilon b(x) \frac{S_x}{v}\right)_{tt} \frac{v}{\mu} S_{tt}\dif x+\int_0^1 \frac{v}{\mu} S_{tt}^2 \dif x 
=\int_0^1 \left(\frac{\mu u_x}{v}\right)_{tt} \frac{v}{\mu} S_{tt} \dif x.
\end{align*}
We estimate each term of the above equation separately.  For the left-hand side of the above equation, we have
\begin{align*}
&\int_0^1 \tau \left(\frac{S_t}{v}\right)_{tt} \frac{v}{\mu} S_{tt} \dif x\\
&=\int_0^1 \tau \left( \frac{1}{v} S_{ttt}-\frac{2}{v^2} v_t S_{tt}+\frac{2}{v^3}v_t^2 S_t-\frac{1}{v^2} v_{tt} S_t\right) \frac{v}{\mu} S_{tt} \dif x \ge \frac{\dif}{\dif t} \int_0^1 \frac{\tau}{2\mu}  S_{tt}^2 \dif x-\frac{C}{\tau} E^\frac{1}{2}(t) \mathcal D(t)
\end{align*}
and
\begin{align*}
&\int_0^1 \tau \epsilon b(x) \left(\frac{S_x}{v}\right)_{tt} \frac{v}{\mu} S_{tt}\dif x\\
&=\int_0^1 \tau \epsilon b(x) \left( \frac{1}{v} S_{xtt}-\frac{2}{v^2} v_t S_{tx}+\frac{v_t^2}{v^3} S_x-\frac{v_{tt}}{v^2} S_x\right) \frac{v}{\mu} S_{tt} \dif x \\
&\ge \frac{1}{2\mu} \tau \epsilon b(x) S_{tt}^2\big|_0^1-\int_0^1 \frac{\tau \epsilon}{\mu} S_{tt}^2 \dif x-C E^\frac{1}{2}(t) \mathcal D(t).
\end{align*}
For the right-hand side of the above equation, we have
\begin{align*}
\int_0^1 \left(\frac{\mu}{v} u_x\right)_{tt} \frac{v}{\mu} S_{tt}\dif x
&=\int_0^1 \left(\frac{2v}{v^3} v_t^2u_x-\frac{1}{v^2} v_{tt}u_x-2 \frac{1}{v^2} v_t u_{xt}+\frac{1}{v} u_{xtt}\right) v S_{tt} \dif x\\
&\le \int_0^1 u_{xtt} S_{tt} \dif x +\frac{C}{\tau} E^\frac{1}{2}(t) \mathcal D(t).
\end{align*}

Therefore, we derive that
\begin{align}\label{3.17}
\frac{\dif}{\dif t} \int_0^1 \frac{\tau}{2\mu} S_{tt}^2 \dif x-\int_0^1 u_{xtt} S_{tt}\dif x+\frac{1}{4\mu} \int_0^1 S_{tt}^2\dif x \le \frac{C}{\tau} E^\frac{1}{2}(t) \mathcal D(t).
\end{align}
Combining \eqref{3.16} and \eqref{3.17}, integrating  the result over $(0, t)$ and noticing that 
\begin{align*}
\int_0^1 \tau^2 \left(  u_{tt}^2+  v_{tt}^2+ \tau S_{tt}^2 \right)    (t=0, x)\dif x
 \le C \|V_0^2\|_{L^2}^2 \le CE_0,
\end{align*}
we get the desired result.
\end{proof}

%Estimates of Second Derivative to $t$ and $x$.

\begin{lemma}\label{Le3.8}
There exists a constant $C$ such that
\begin{align}
\int_0^1 \left(-\frac{1}{2} p^\prime(v)v_{tx}^2+\frac{1}{2}u_{tx}^2+\frac{\tau}{2\mu}S_{tx}^2 \right)\dif x +\frac{1}{4\mu}\int_0^t\int_0^1 S_{xt}^2 \dif x \nonumber \\ 
 \le C\left(E_0+E^\frac{3}{2}(t)+ E^\frac{1}{2}(t) \int_0^t \mathcal D(s)\dif s \right). \label{3.18}
\end{align}
\end{lemma}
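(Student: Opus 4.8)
The plan is to mimic the pure time-derivative estimate of Lemma \ref{Le3.7}, but now for the mixed derivative $\partial_t\partial_x$, and to track carefully the boundary contributions which, unlike in Lemma \ref{Le3.7}, do \emph{not} vanish. First I would differentiate the $\partial_x$-system \eqref{3.6} once more in $t$, obtaining $v_{ttx}=u_{xxt}$, $u_{ttx}+p(v)_{xxt}=S_{xxt}$ and $\tau S_{ttx}+\tau\epsilon(b(x)S_x)_{xt}+(vS)_{xt}=\mu u_{xxt}$. Multiplying these by $-p'(v)v_{tx}$, $u_{tx}$ and $\frac1\mu S_{tx}$ respectively and integrating over $[0,1]$ produces the time derivative of the energy $\int_0^1(\frac12 u_{tx}^2-\frac12 p'(v)v_{tx}^2+\frac{\tau}{2\mu}S_{tx}^2)\dif x$ (positive since $p'(v)<0$), together with the dissipation $\frac1\mu\int_0^1 vS_{tx}^2\dif x\ge\frac{3}{4\mu}\int_0^1 S_{tx}^2\dif x$ coming from the leading part of $(vS)_{xt}$. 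As in Lemma \ref{Le3.7}, the artificial-viscosity term $\frac{\tau\epsilon}{\mu}\int_0^1(b(x)S_x)_{xt}S_{tx}\dif x$ contributes, after one integration by parts and using $b'=2>0$, a favorably signed boundary term $\frac{\tau\epsilon}{2\mu}[b(x)S_{tx}^2]_0^1\ge0$ plus terms of order $\epsilon$ that are absorbed.

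The delicate point is the interaction between the pressure and the coupling terms. Integrating $\int_0^1 u_{tx}S_{xxt}\dif x$ by parts and adding the $S$-equation contribution $\int_0^1 u_{xxt}S_{tx}\dif x$, the two interior integrals cancel and leave only the boundary term $[u_{tx}S_{xt}]_0^1$. Likewise, integrating $-\int_0^1 u_{tx}p(v)_{xxt}\dif x$ by parts and combining with $-\int_0^1 p'(v)v_{tx}u_{xxt}\dif x$ leaves the boundary term $[-u_{tx}p(v)_{xt}]_0^1$ together with the interior remainder $\int_0^1 u_{xxt}p''(v)v_xv_t\dif x$ (using $p(v)_{xt}-p'(v)v_{tx}=p''(v)v_xv_t$). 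The sum of the two boundary contributions is $[u_{tx}(S_{xt}-p(v)_{xt})]_0^1$, and here the essential structure of the system rescues us: differentiating the momentum equation $u_t+p(v)_x=S_x$ in $t$ and restricting to $\partial\Omega$, where the high-order compatibility gives $u_{tt}=0$, yields $p(v)_{xt}=S_{xt}$ on the boundary. Hence the whole boundary term vanishes. This is exactly the step where the characteristic nature of the boundary is handled, and I expect it to be the main obstacle to locate, since the naive estimate leaves uncontrolled boundary traces of $u_{tx}$.

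There remains the interior term $\int_0^1 u_{xxt}p''(v)v_xv_t\dif x$, whose factor $u_{xxt}$ is a third-order derivative lying in neither $E(t)$ nor $\mathcal D(t)$. I would not estimate it pointwise in time; instead, after integrating the whole differential inequality over $(0,t)$, I integrate this term by parts in time, writing $u_{xxt}=\partial_t u_{xx}$:
\begin{align*}
\int_0^t\!\!\int_0^1 u_{xxt}\,p''(v)v_xv_t\,\dif x\dif s=\Big[\int_0^1 u_{xx}\,p''(v)v_xv_t\,\dif x\Big]_0^t-\int_0^t\!\!\int_0^1 u_{xx}\,\partial_t\big(p''(v)v_xv_t\big)\,\dif x\dif s.
\end{align*}
The boundary-in-time term is bounded by $\|u_{xx}\|_{L^2}\|v_x\|_{L^\infty}\|v_t\|_{L^2}\lesssim E^{\frac32}(t)+E_0$, which produces precisely the $E^{\frac32}(t)$ on the right of \eqref{3.18}; in the remaining integral the only dangerous factor $v_{tt}$ is rewritten as $u_{xt}$ via $v_t=u_x$, so that every summand is a product of two $\mathcal D^{\frac12}$ quantities and one $E^{\frac12}$ quantity, hence bounded by $E^{\frac12}(t)\int_0^t\mathcal D\dif s$.

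Finally, all the genuinely cubic interior remainders (for instance $-\frac12\int_0^1 p''(v)v_tv_{tx}^2\dif x$ and the lower-order pieces of $(vS)_{xt}$) are controlled by $E^{\frac12}(t)\mathcal D(t)$ through the one-dimensional embedding $\|f\|_{L^\infty}\lesssim\|f\|_{H^1}$ and the smallness $E\le\delta$, bounding each $L^\infty$ factor and each truly dissipative factor by $\mathcal D^{\frac12}$ and the remaining factor by $E^{\frac12}$. Collecting everything, integrating over $(0,t)$, and using $\int_0^1(\cdots)(0,x)\dif x\le CE_0$ for the initial energy yields \eqref{3.18}. As in Lemma \ref{Le3.7}, this computation is formal---it uses the high-order compatibility $u_{tt}|_{\partial\Omega}=0$ and third-order derivatives---and is justified rigorously by a standard density argument.
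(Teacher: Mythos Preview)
Your proposal is correct and follows essentially the same route as the paper: the key observation that the boundary contribution $[u_{tx}(S_{xt}-p(v)_{xt})]_0^1$ vanishes via $u_{tt}|_{\partial\Omega}=0$, and the removal of the third-order factor $u_{xxt}$ in the cubic remainder by an integration by parts in time (producing the $E^{3/2}(t)$ term), are exactly the paper's two main steps. The only cosmetic difference is that the paper first divides $\eqref{2.1}_3$ by $v$ and then applies $\partial_t\partial_x$ (as in Lemma~\ref{Le3.7}), multiplying by $\tfrac{v}{\mu}S_{tx}$, whereas you keep the undivided form $(vS)_{xt}$ (as in Lemma~\ref{Le3.3}) and multiply by $\tfrac1\mu S_{tx}$; both organizations yield the same dissipation $\tfrac{v}{\mu}S_{tx}^2$ and the same cubic error structure.
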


\begin{proof}
Taking derivative to the equations \eqref{2.1} with respect to $t$ and $x$ once, respectively, we get
\begin{align}\label{3.19}
\begin{cases}
v_{ttx}=u_{txx},\\
u_{ttx}+p(v)_{txx}=S_{txx},\\
\tau \left(\frac{S_t}{v}\right)_{tx}+\left(\tau \epsilon b(x)\frac{S_x}{v} \right)_{tx}+S_{tx}=\left(\frac{\mu}{v} u_x\right)_{tx}.
\end{cases}
\end{align}
Multiplying the  equation $\eqref{3.19}_2$ by $u_{tx}$ and integrating the result, we get
\begin{align*}
\frac{\dif}{\dif t} \int_0^1 \frac{1}{2} u_{tx}^2 \dif x =\int_0^1 (S_{txx}-p(v)_{txx})u_{tx}\dif x =-\int_0^1 (S_{tx}-p(v)_{tx}) u_{txx}\dif x,
\end{align*}
where we have used the condition $u_{tt}(t,0)=u_{tt}(t,1)=0$ (formally) which is equivalent to $$(S_{tx}-p(v)_{tx})(t,0)=(S_{tx}-p(v)_{tx})(t,1)=0.$$
Note that
\begin{align*}
&\int_0^1 p(v)_{tx} u_{txx}\dif x =\int_0^1 (p^\prime(v) v_{tx} +p^{\prime\prime}(v) v_x v_t) u_{txx}\dif x\\
&=\frac{\dif}{\dif t} \int_0^1 \frac{1}{2} p^\prime(v)v_{tx}^2 \dif x-\frac{1}{2}\int_0^1 p^{\prime\prime}(v) v_t v_{tx}^2 \dif x+\frac{\dif}{\dif t} \int_0^1 p^{\prime\prime}(v) v_x v_t u_{xx}\dif x-\int_0^1 (p^{\prime\prime}(v) v_x v_t)_t u_{xx}\dif x\\
&\le \frac{\dif}{\dif t} \int_0^1 (\frac{1}{2} p^\prime(v)v_{tx}^2+p^{\prime\prime}(v) v_x v_t u_{xx}) \dif x+C E^\frac{1}{2}(t) \mathcal D(t).
\end{align*}
Thus, we get
\begin{align}\label{3.20}
\frac{\dif}{\dif t} \int_0^1 (\frac{1}{2} u_{tx}^2-\frac{1}{2} p^\prime(v)v_{tx}^2) \dif x 
\le \frac{\dif}{\dif t} \int_0^1 p^{\prime\prime}(v) v_x v_t u_{xx}\dif x- \int_0^1 S_{tx} u_{txx}\dif x+C E^\frac{1}{2}(t) \mathcal D(t).
\end{align}
On the other hand, by multiplying the equation $\eqref{3.19}_3$ by $\frac{v}{\mu} S_{tx}$, and integrating the result, we get
\begin{align*}
\int_0^1 \tau \left(\frac{S_t}{v}\right)_{tx} \frac{v}{\mu} S_{tx} \dif x+\int_0^1\left( \tau\epsilon b(x) \frac{S_x}{v}\right)_{tx} \frac{v}{\mu} S_{tx}\dif x+\int_0^1 \frac{v}{\mu} S_{tx}^2 \dif x 
=\int_0^1 \left(\frac{\mu u_x}{v}\right)_{tx} \frac{v}{\mu} S_{tx} \dif x.
\end{align*}
We estimate each term of the above equation separately.  For the left-hand side of the above equation, we have
\begin{align*}
&\int_0^1 \tau \left(\frac{S_t}{v}\right)_{tx} \frac{v}{\mu} S_{tx} \dif x\\
&=\int_0^1 \tau \left( \frac{1}{v} S_{ttx}-\frac{v_x}{v^2}  S_{tt}-\frac{v_t}{v^2}S_{tx}-\frac{S_t}{v^2}v_{tx}+\frac{2}{v^3}v_t S_t v_x\right) \frac{v}{\mu} S_{tx} \dif x \ge \frac{\dif}{\dif t} \int_0^1 \frac{\tau}{2\mu}  S_{tx}^2 \dif x-C E^\frac{1}{2}(t) \mathcal D(t)
\end{align*}
and
\begin{align*}
&\int_0^1 \tau \epsilon  \left(\frac{b(x)S_x}{v}\right)_{tx} \frac{v}{\mu} S_{tx}\dif x\\
&=\int_0^1 \tau \epsilon b(x) \left( \frac{1}{v} S_{txx}+\frac{2}{v}S_{tx}-\frac{2v_t}{v^2}S_x-\frac{v_x}{v^2}  S_{tx}-\frac{v_t}{v^2}S_{xx}+\frac{2v_tv_x}{v^3} S_x-\frac{v_{tx}}{v^2} S_x\right) \frac{v}{\mu} S_{tx} \dif x \\
&\ge \frac{1}{2\mu} \tau \epsilon b(x) S_{tx}^2\big|_0^1+\int_0^1 \frac{\tau \epsilon}{\mu} S_{tx}^2 \dif x-C E^\frac{1}{2}(t) \mathcal D(t).
\end{align*}
For the right-hand side of the above equation, we have
\begin{align*}
\int_0^1 \left(\frac{\mu}{v} u_x\right)_{tx} \frac{v}{\mu} S_{tx}\dif x
&=\int_0^1 \left(\frac{2}{v^3} v_t v_x u_x-\frac{1}{v^2} v_{tx}u_x- \frac{1}{v^2} v_t u_{xx}-\frac{1}{v^2}v_x u_{xt}+\frac{1}{v} u_{txx}\right) v S_{tx} \dif x\\
&\le \int_0^1 u_{txx} S_{tx} \dif x +C E^\frac{1}{2}(t) \mathcal D(t).
\end{align*}

Therefore, one derive that
\begin{align}
\frac{\dif}{\dif t} \int_0^1 \frac{\tau}{2\mu} S_{tx}^2 \dif x-\int_0^1 u_{txx}S_{tx}\dif x+\frac{1}{4\mu} \int_0^1 S_{tx}^2\dif x \le C E^\frac{1}{2}(t) \mathcal D(t). \label{3.21}
\end{align}
Combining \eqref{3.20} and \eqref{3.21}, using Lemma \ref{Le3.5} and integrating the result over $(0, t)$, we get the desired result.

\end{proof}

\begin{lemma}\label{Le3.9}
There exists a constant $C$ such that
\begin{align}
\int_0^1 (-\frac{1}{2} p^\prime(v)v_{xx}^2+\frac{1}{2}u_{xx}^2+\frac{\tau}{2\mu}S_{xx}^2 )\dif x +\frac{1}{4\mu}\int_0^t\int_0^1 S_{xx}^2 \dif x+\frac{\tau\epsilon}{2\mu}(S_{xx}^2(t,1)+S_{xx}^2(t,0)) \nonumber\\
\le C \left(E_0+E^\frac{3}{2}(t))+\int_0^t u_{tx}u_{xx}\big|_0^1 +E^\frac{1}{2}(t) \int_0^t \mathcal D(s)\dif s  \right). \label{3.22}
\end{align}
\end{lemma}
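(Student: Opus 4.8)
The plan is to mirror the proofs of Lemmas \ref{Le3.7} and \ref{Le3.8}, now differentiating twice in $x$, the crucial new feature being that the characteristic boundary produces a boundary term that genuinely cannot be absorbed — which is exactly why \eqref{3.22} carries $\int_0^t u_{tx}u_{xx}\big|_0^1$ on its right-hand side. I would first differentiate the system twice in $x$, using the divided form of $\eqref{2.1}_3$ as in \eqref{3.19}, to obtain
\begin{align*}
\begin{cases}
v_{txx}=u_{xxx},\\
u_{txx}+p(v)_{xxx}=S_{xxx},\\
\tau\left(\frac{S_t}{v}\right)_{xx}+\left(\tau\epsilon b(x)\frac{S_x}{v}\right)_{xx}+S_{xx}=\left(\frac{\mu}{v}u_x\right)_{xx},
\end{cases}
\end{align*}
and then multiply the first two equations by $-p^\prime(v)v_{xx}$ and $u_{xx}$, and the third by $\frac{v}{\mu}S_{xx}$, integrating each over $[0,1]$.

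For the momentum part, integrating by parts twice gives $\frac{\dif}{\dif t}\int\frac12 u_{xx}^2=(S_{xx}-p(v)_{xx})u_{xx}\big|_0^1-\int(S_{xx}-p(v)_{xx})u_{xxx}$. Here the boundary term does \emph{not} vanish: differentiating $\eqref{2.1}_2$ in $x$ yields the pointwise identity $S_{xx}-p(v)_{xx}=u_{tx}$, so the boundary contribution is precisely $u_{tx}u_{xx}\big|_0^1$ — the characteristic term that survives because, unlike $u,u_t,u_{tt}$, neither $u_x$ nor $u_{xx}$ is controlled on $\partial\Omega$. Combining with the $-p^\prime(v)v_{xx}$ identity and using $u_{xxx}=v_{txx}=(v_{xx})_t$ together with $p(v)_{xx}=p^\prime(v)v_{xx}+p^{\prime\prime}(v)v_x^2$, the surviving interior contribution is $\int p^{\prime\prime}(v)v_x^2(v_{xx})_t=\frac{\dif}{\dif t}\int p^{\prime\prime}(v)v_x^2v_{xx}-\int(p^{\prime\prime}(v)v_x^2)_tv_{xx}$, a perfect time derivative (bounded by $E^{3/2}(t)$ after time integration, via Sobolev embedding) plus commutators bounded by $CE^{\frac12}(t)\mathcal D(t)$; the top-order cross term $-\int S_{xx}u_{xxx}$ is left to cancel against the stress equation.

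For the stress equation, multiplying by $\frac{v}{\mu}S_{xx}$ the leading contributions are: from the relaxation term, $\frac{\dif}{\dif t}\int\frac{\tau}{2\mu}S_{xx}^2$ plus commutators $\le CE^{\frac12}(t)\mathcal D(t)$; from the artificial viscosity, after the integration by parts $\frac{\tau\epsilon}{\mu}\int b(x)(\frac12 S_{xx}^2)_x=\frac{\tau\epsilon}{2\mu}b(x)S_{xx}^2\big|_0^1-\frac{\tau\epsilon}{\mu}\int S_{xx}^2$, where $b(1)=1,\,b(0)=-1$ turn the boundary piece into the \emph{favourable} term $\frac{\tau\epsilon}{2\mu}(S_{xx}^2(t,1)+S_{xx}^2(t,0))$ while the interior piece is harmless since $\epsilon\ll1$; from the damping term, $\frac1\mu\int vS_{xx}^2\ge\frac{3}{4\mu}\int S_{xx}^2$ by \eqref{3.4}; and from the right-hand side, $\int u_{xxx}S_{xx}$ plus commutators. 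The key cancellation $-\int S_{xx}u_{xxx}+\int u_{xxx}S_{xx}=0$ removes the top-order cross term, exactly as in Lemmas \ref{Le3.7}--\ref{Le3.8}.

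Adding the two identities, the top-order interior terms combine into the positive energy density $-\frac12 p^\prime(v)v_{xx}^2+\frac12 u_{xx}^2+\frac{\tau}{2\mu}S_{xx}^2$ (recall $p^\prime(v)<0$), cancel, or are dominated by $CE^{\frac12}(t)\mathcal D(t)$; integrating over $(0,t)$ and invoking Lemma \ref{Le3.5} for the lower-order pieces, the initial data contributes $\lesssim E_0$, the perfect time derivatives contribute $\lesssim E^{\frac32}(t)$, the favourable boundary contribution and the dissipation $\frac{1}{4\mu}\int_0^t\int S_{xx}^2$ remain on the left, and the uncontrolled $\int_0^t u_{tx}u_{xx}\big|_0^1$ stays on the right, giving \eqref{3.22}. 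The main obstacle is precisely this surviving boundary term: there is no way to bound $\int_0^t u_{tx}u_{xx}\big|_0^1$ by $E$ and $\mathcal D$ at this stage, so I would retain it on the right-hand side and defer its treatment — via the essential structure of the system and the favourable $S_{xx}^2$ boundary term just produced — to the next step. As in Lemma \ref{Le3.7}, all manipulations are formal and justified by a standard density argument.
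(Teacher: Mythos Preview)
Your proposal is correct and follows essentially the same route as the paper: differentiate \eqref{2.1} twice in $x$, multiply the momentum equation by $u_{xx}$ and the (divided) stress equation by $\frac{v}{\mu}S_{xx}$, identify the unavoidable boundary contribution $(S_{xx}-p(v)_{xx})u_{xx}\big|_0^1=u_{tx}u_{xx}\big|_0^1$ via $\eqref{3.6}_2$, extract the favourable $\tau\epsilon$ boundary term from the artificial viscosity, and cancel the top-order cross term $\int S_{xx}u_{xxx}$. The only cosmetic differences are that the paper does not separately multiply $\eqref{3.23}_1$ by $-p'(v)v_{xx}$ (it uses $u_{xxx}=v_{txx}$ directly inside the $p(v)_{xx}$ term), and that only one integration by parts is actually performed in the momentum step, not two.
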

\begin{proof}
Taking derivative to the equation \eqref{2.1} with respect to $x$ twice, we get
\begin{align}\label{3.23}
\begin{cases}
v_{txx}=u_{xxx},\\
u_{txx}+p(v)_{xxx}=S_{xxx},\\
\tau \left(\frac{S_t}{v}\right)_{xx}+\tau \epsilon \left(\frac{b(x)S_x}{v}\right)_{xx}+S_{xx}=\left(\frac{\mu u_x}{v}\right)_{xx}.
\end{cases}
\end{align}
Multiplying the  equation $\eqref{3.23}_2$ by $u_{xx}$, one obtain
\begin{align*}
\frac{\dif}{\dif t} \int_0^1 \frac{1}{2} u_{xx}^2 \dif x 
&=-\int_0^1 (p(v)_{xxx}-S_{xxx})u_{xx}\dif x\\
%&=-(p(v)_{xx}-S_{xx})u_{xx}\big|_0^1+\int_0^1 (p(v)_{xx}-S_{xx})u_{xxx}\dif x\\
&=u_{tx} u_{xx}\big|_0^1 +\int_0^1 p(v)_{xx} u_{xxx}\dif x-\int_0^1 S_{xx} u_{xxx}\dif x.
\end{align*}
Note that, by using the equation $\eqref{3.23}_1$, we have
\begin{align*}
&\int_0^1 p(v)_{xx}u_{xxx}\dif x=\int_0^1 (p^\prime(v) v_{xx}+p^{\prime\prime}(v) v_x^2)v_{txx} \dif x\\
&=\frac{\dif}{\dif t} \int_0^1 \frac{1}{2}p^\prime(v) v_{xx}^2 \dif x-\frac{1}{2} \int_0^1 p^{\prime\prime}(v)v_t v_{xx}^2 \dif x\\
&\qquad\qquad\qquad+\frac{\dif}{\dif t} \int_0^1 p^{\prime\prime}(v) v_x^2 v_{xx} \dif x-\int_0^1 (p^{\prime\prime\prime}(v)v_t v_{x}^2+2p^{\prime\prime}(v) v_x v_{xt} )v_{xx}\dif x.
\end{align*}
Therefore, we get
\begin{align}\label{3.24}
\frac{\dif}{\dif t} \int_0^1 (\frac{1}{2} u_{xx}^2-\frac{1}{2} p^\prime(v) v_{xx}^2-p^{\prime\prime}(v)v_x^2v_{xx}) \dif x+\int_0^1 S_{xx} u_{xxx}\dif x\le u_{tx}u_{xx}\big|_0^1+C E^\frac{1}{2}(t) \mathcal D(t).
\end{align}
On the other hand, multiplying the  equation $\eqref{3.23}_3$ by $\frac{v}{\mu} S_{xx}$ and integrating the result, we have
\begin{align*}
\int_0^1 \tau \left(\frac{S_t}{v}\right)_{xx} \frac{v}{\mu} S_{xx} \dif x+\int_0^1\left( \tau\epsilon b(x) \frac{S_x}{v}\right)_{xx} \frac{v}{\mu} S_{xx}\dif x+\int_0^1 \frac{v}{\mu} S_{xx}^2 \dif x 
=\int_0^1 \left(\frac{\mu u_x}{v}\right)_{xx} \frac{v}{\mu} S_{xx} \dif x.
\end{align*}
Firstly, we  estimate the first two terms of the above equation as follows.
\begin{align*}
&\int_0^1 \tau \left(\frac{S_t}{v}\right)_{xx} \frac{v}{\mu} S_{xx}\dif x\\
&=\int_0^1 \tau \left(\frac{S_{txx}}{v}-\frac{2v_x}{v^2} S_{tx}+\frac{2v_x^2}{v^3} S_t-\frac{v_{xx}}{v^2} S_t\right) \frac{v}{\mu} S_{xx}\dif x
\ge \frac{\dif}{\dif t} \int_0^1 \frac{\tau}{2\mu} S_{xx}^2 \dif x -CE^\frac{1}{2}(t) \mathcal D(t)
\end{align*}
and
\begin{align*}
&\int_0^1 \tau \epsilon \left(\frac{b(x)S_x}{v}\right)_{xx} \frac{v}{\mu} S_{xx}\dif x \\
&=\int_0^1 \tau \epsilon \left( \frac{b(x)}{v}S_{xxx}+\frac{4}{v} S_{xx}-\frac{2 b(x) v_x}{v^2}S_{xx}-\frac{4v_x}{v^2}S_x-\frac{b(x)v_{xx}}{v^2}S_x+\frac{2 b(x)v_x^2}{v^3} S_x\right)\frac{v}{\mu} S_{xx}\dif x\\
&\ge \frac{\tau\epsilon}{\mu} b(x) \frac{1}{2} S_{xx}^2 \big|_0^1+\frac{3\tau \epsilon}{\mu}\int_0^1 S_{xx}^2 \dif x -C E^\frac{1}{2}(t)\mathcal D(t) \ge \frac{\tau\epsilon}{2\mu}(S_{xx}^2(t,1)+S_{xx}^2(t,0)) -C E^\frac{1}{2}(t)\mathcal D(t).
\end{align*}
Secondly, we obtain
\begin{align*}
&\int_0^1 \left(\frac{\mu u_x}{v}\right)_{xx} \frac{v}{\mu} S_{xx}\dif x \\
&=\int_0^1 \left(\frac{1}{v} u_{xxx}-\frac{2v_x}{v^2} u_{xx}+\frac{2v_x^2}{v^3} u_x-\frac{v_{xx}}{v^2}u_x\right) v S_{xx}\dif x \le \int_0^1 u_{xxx} S_{xx}\dif x +C E^\frac{1}{2}(t)\mathcal D(t).
\end{align*}

Therefore, we get
\begin{align}
&\frac{\dif}{\dif t} \int_0^1 \frac{\tau}{2\mu} S_{xx}^2 \dif x+\frac{1}{4\mu} \int_0^1 S_{xx}^2\dif x +\frac{\tau\epsilon}{2\mu}(S_{xx}^2(t,1)+S_{xx}^2(t,0))-\int_0^1 u_{xxx}S_{xx} \dif x \nonumber\\
&\le  C E^\frac{1}{2}(t)\mathcal D(t) . \label{3.25}
\end{align} 
Combining the equations \eqref{3.24} and \eqref{3.25} and integrating the result over $(0, t)$, we get the desired result.
\end{proof}

%\begin{align}
%& \int_0^1 (\frac{1}{2} u_{xx}^2-\frac{1}{2} p^\prime(v) v_{xx}^2+\frac{\tau}{2\mu} S_{xx}^2) \dif x+\frac{1}{4\mu} \int_0^t\int_0^1 S_{xx}^2\dif x\dif t +\frac{\tau\epsilon}{2\mu}\int_0^t (S_{xx}^2(t,1)+S_{xx}^2(t,0))\dif t \nonumber\\
%&\le C(E_0+E^\frac{3}{2}(t))+\int_0^t u_{tx}u_{xx}\big|_0^1 \dif t+C E^\frac{1}{2}(t) \int_0^t \mathcal D(s)\dif s+C\epsilon \int_0^t \int_0^1 v_{xx}^2 \dif x + C\epsilon \|\tilde S\|_{H^3((0,\infty)\times \mathbb R)}^2.
%\end{align}
Now, we prove the following key lemma which deal with the boundary term $u_{tx}u_{xx}\big|_0^1$. 

\begin{lemma}\label{Le3.10}
There exists a constant $C$ such that
\begin{align}
&\int_0^t \frac{1}{2} (u_{xx}^2(t,1)+u_{xx}^2(t,0)+ u_{tx}^2(t, 1)+u_{tx}^2(t, 0) ) \dif t \nonumber\\
 &\le C\int_0^1 (u_{tx}^2+u_{tt}^2+v_{tx}^2+\epsilon  v_{xx}^2 )\dif x
+C  \int_0^t\int_0^1 (u_{tx}^2+u_{xx}^2+u_{tt}^2 )\dif x \dif t \nonumber\\
&+C \epsilon \int_0^t\int_0^1 (v_{xx}^2+u_{xx}^2)\dif x \dif t +\frac{1}{8\mu} \int_0^t\int_0^1 S_{xx}^2 \dif x \dif t
+\frac{\tau \epsilon}{4\mu} \int_0^t (S_{xx}^2(t,1)+S_{xx}^2(t, 0)) \dif t +CE^\frac{1}{2}\int_0^t \mathcal D(s)\dif s .
\end{align}

\end{lemma}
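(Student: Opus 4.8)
The plan is to reduce the statement, via Young's inequality $|u_{tx}u_{xx}|\le\frac12(u_{tx}^2+u_{xx}^2)$ applied at $x=0,1$, to controlling the time-integrated boundary traces $\int_0^t(u_{xx}^2+u_{tx}^2)\big|_{\partial\Omega}\,\dif t$. The mechanism for producing a trace from interior quantities is the weighted identity $f^2(t,1)+f^2(t,0)=\int_0^1\partial_x\big(b(x)f^2\big)\,\dif x=\int_0^1\big(2f^2+2b f f_x\big)\,\dif x$, which uses $b(0)=-1$, $b(1)=1$ and $b'=2$; the difficulty is always the cross term $\int_0^1 b f f_x\,\dif x$, which raises the order by one and must be lowered again through the evolution equations. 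The governing principle is that the only boundary $S_{xx}$ traces we are allowed to keep must carry a factor $\tau\epsilon$ (so as to be absorbed by the term $\frac{\tau\epsilon}{2\mu}(S_{xx}^2(t,1)+S_{xx}^2(t,0))$ produced in Lemma~\ref{Le3.9}), and the only boundary traces of $S_x$ we keep must be convertible, through the trace inequality $g^2\big|_{\partial\Omega}\le C\|g\|_{L^2}^2+\eta\|g_x\|_{L^2}^2$ with $\eta$ arbitrarily small, into the dissipation $\int_0^t\|S_x\|_{L^2}^2\,\dif t\subset\int_0^t\mathcal D\,\dif t$ plus the harmless fraction $\frac{1}{8\mu}\int_0^t\|S_{xx}\|_{L^2}^2\,\dif t$.

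For $u_{xx}^2\big|_{\partial\Omega}$ the key is an algebraic elimination at the boundary. Equation $\eqref{3.6}_3$ gives, everywhere, $\mu u_{xx}=\tau S_{tx}+\tau\epsilon(2S_x+bS_{xx})+v_xS+vS_x$. The dangerous term is $\tau S_{tx}\big|_{\partial\Omega}$, whose trace would cost a derivative we do not control. I eliminate it using the boundary condition: differentiating $\eqref{2.1}_2$ in $t$ and writing $p(v)_{xt}=p''(v)v_tv_x+p'(v)u_{xx}$ yields $u_{tt}+p'(v)u_{xx}+p''(v)v_tv_x=S_{tx}$, and since $u\equiv0$ on $\partial\Omega$ forces $u_{tt}\big|_{\partial\Omega}=0$, we get $S_{tx}\big|_{\partial\Omega}=p'(v)u_{xx}+p''(v)v_tv_x$. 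Substituting back and using $p'(v)<0$, so that $\mu-\tau p'(v)\ge\mu>0$, closes the identity for $u_{xx}\big|_{\partial\Omega}$:
\begin{align*}
(\mu-\tau p'(v))\,u_{xx}\big|_{\partial\Omega}=\tau p''(v)v_tv_x+\tau\epsilon(2S_x+bS_{xx})+v_xS+vS_x.
\end{align*}
Squaring, the term $\tau^2\epsilon^2 S_{xx}^2\big|_{\partial\Omega}\le\tau\epsilon\,S_{xx}^2\big|_{\partial\Omega}$ (since $\tau\epsilon\ll1$) supplies exactly the absorbable boundary trace $\frac{\tau\epsilon}{4\mu}\int_0^t(S_{xx}^2(t,1)+S_{xx}^2(t,0))\,\dif t$; the terms $vS_x$ and $\tau\epsilon S_x$ are handled by the trace inequality above; and $\tau p''(v)v_tv_x$, $v_xS$ are cubic and feed into $E^{\frac12}(t)\int_0^t\mathcal D\,\dif s$.

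For $u_{tx}^2\big|_{\partial\Omega}$ the same pointwise elimination degenerates (the $t$-differentiated third equation, solved for $S_{tt}\big|_{\partial\Omega}$, only reproduces itself), so I instead run the weighted identity with $f=u_{tx}$ and lower the cross term $\int_0^1 b\,u_{tx}u_{txx}\,\dif x$ through the $t$-differentiated system \eqref{3.19} rather than pointwise. Replacing $u_{txx}$ by those equations and integrating by parts in both $x$ and $t$ converts the total time derivatives into the instantaneous quantities $\int_0^1(u_{tx}^2+u_{tt}^2+v_{tx}^2+\epsilon v_{xx}^2)\,\dif x$ evaluated at time $t$ (the value at $t=0$ being $\le CE_0$), while the genuinely second-order interior remainders collect into $\int_0^t\int_0^1(u_{tx}^2+u_{xx}^2+u_{tt}^2)\,\dif x\,\dif t$; the contributions carrying the regularizing weight $b(x)$ from $\eqref{2.1}_3$ produce the $\epsilon$-weighted volume terms $\epsilon\int_0^t\int_0^1(v_{xx}^2+u_{xx}^2)\,\dif x\,\dif t$. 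Assembling the two boundary traces and discarding the $t=0$ data against $E_0$ gives the claimed inequality.

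The main obstacle is exactly the loss of derivative at the uniformly characteristic boundary: a naive trace inequality for $u_{xx}$, $u_{tx}$, or for $S_{tx}\big|_{\partial\Omega}$, would require third-order bounds that are absent from $\mathcal D$. The decisive idea that removes it is algebraic rather than analytic, namely using the boundary relation $u_{tt}\big|_{\partial\Omega}=0$ together with the momentum equation to trade the uncontrollable trace $S_{tx}\big|_{\partial\Omega}$ for $u_{xx}\big|_{\partial\Omega}$ itself (absorbed by the coercive coefficient $\mu-\tau p'(v)$) and for the boundary $S_{xx}$ trace weighted by the small factor $\tau\epsilon$. Keeping scrupulous track of the powers of $\tau$ and $\epsilon$ — so that every retained boundary $S_{xx}$ carries $\tau\epsilon$ and every retained $S_x$ trace is absorbed into $\mathcal D$ — is what makes the constant in the final estimate independent of both parameters.
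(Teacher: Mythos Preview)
Your treatment of the $u_{xx}$ boundary trace is correct and is a genuinely different route from the paper's. The paper does \emph{not} use a pointwise elimination at the boundary: instead it multiplies the twice-$x$-differentiated constitutive law $\eqref{3.23}_3$ by the weighted test function $\frac{v}{\mu}\bar b(x)u_{xx}$ (with $\bar b=-b$) and integrates over $[0,1]$. The trace $\tfrac12(u_{xx}^2(t,1)+u_{xx}^2(t,0))$ then emerges from the leading term $-\int_0^1 u_{xxx}\,\bar b\,u_{xx}\,\dif x$ after one integration by parts; the third-order piece $S_{xxx}$ carried by the $\tau\epsilon$-regularisation is removed by substituting the momentum equation, which in turn produces a $\tau\epsilon\,v_{xx}^2|_{\partial\Omega}$ trace that is finally converted, again via the momentum equation, into the $\tau\epsilon S_{xx}^2|_{\partial\Omega}$ term. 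Your pointwise identity $(\mu-\tau p'(v))u_{xx}|_{\partial\Omega}=\tau p''(v)v_tv_x+\tau\epsilon(2S_x+bS_{xx})+v_xS+vS_x$ is considerably more transparent, but it unavoidably produces the $O(1)$ contribution $vS_x|_{\partial\Omega}$. After squaring and applying the trace inequality this yields $C\int_0^t\|S_x\|_{L^2}^2\,\dif t$, which is \emph{not} dominated by $E^{1/2}\int_0^t\mathcal D$ (the quantity $E^{1/2}$ is small, not large). So what you actually prove is the inequality with an additional term $C\int_0^t\|S_x\|_{L^2}^2\,\dif t$ on the right. That is harmless for Proposition~\ref{Prop3.1}, since Lemma~\ref{Le3.3} already controls this integral by $C(E_0+E^{1/2}\int_0^t\mathcal D)$, but it does not reproduce the stated bound verbatim. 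The paper's multiplier argument, by contrast, keeps the only $S$-contribution in the interior with a small coefficient $\frac{1}{8\mu}\|S_{xx}\|_{L^2}^2$.

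For the $u_{tx}$ trace your method coincides with the paper's once it is unpacked: multiplying $\eqref{3.19}_3$ by $\frac{v}{\mu}\bar b(x)u_{tx}$ is exactly what produces your cross term $\int_0^1 b\,u_{tx}u_{txx}\,\dif x$ (from the leading part $\frac{1}{v}u_{txx}$ of $(\frac{\mu u_x}{v})_{tx}$), and the subsequent substitutions $S_{tx}=u_{tt}+p(v)_{tx}$ together with integration by parts in $t$ are identical.
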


\begin{proof}
 For simplicity, denote $\bar b(x):=-b(x)$ with $\bar b(0)=1, \bar b (1)=-1$. Multiplying the  equation $\eqref{3.23}_3$ by $\frac{v}{\mu} \bar b(x) u_{xx}$ and integrating the result, we get
\begin{align}
\int_0^1 \tau \left(\frac{S_t}{v}\right)_{xx} \frac{v}{\mu} \bar b(x) u_{xx} \dif x+\int_0^1 S_{xx}\frac{v}{\mu}\bar b(x) u_{xx} \dif x 
-\int_0^1 \left(\frac{\mu u_x}{v}\right)_{xx} \frac{v}{\mu} \bar b(x) u_{xx} \dif x \nonumber\\
=-\int_0^1\left( \tau\epsilon b(x) \frac{S_x}{v}\right)_{xx} \frac{v}{\mu} \bar b(x) u_{xx}\dif x . \label{3.26}
\end{align}
We estimate each term of the above equation, respectively. Firstly, we have
\begin{align*}
\int_0^1 \tau\left(\frac{S_t}{v}\right)_{xx} \frac{v}{\mu} \bar b(x) u_{xx} \dif x
=\int_0^1 \tau \left(\frac{S_{txx}}{v}-\frac{2v_x}{v^2} S_{tx}+\frac{2v_x^2}{v^3} S_t-\frac{v_{xx}}{v^2} S_t\right)\frac{v}{\mu} \bar b(x) u_{xx}\dif x\\
\ge \frac{\tau}{\mu} \frac{\dif}{\dif t}\int_0^1 S_{xx} \bar b(x) u_{xx}\dif x -\frac{\tau}{\mu}\int_0^1 (u_{tx}+p(v)_{xx}) \bar b(x) u_{txx}\dif x -C E^\frac{1}{2}(t) \mathcal D(t)
\end{align*}
where
\begin{align*}
-\frac{\tau}{\mu} \int_0^1 u_{tx} \bar b(x) u_{txx}\dif x=\frac{\tau}{\mu} (u_{tx}^2(t,1)+u_{tx}^2(t, 0)) -\frac{2\tau}{\mu}\int_0^1  u_{tx}^2\dif x 
\end{align*}
and
\begin{align*}
&-\frac{\tau}{\mu} \int_0^1 p(v)_{xx} \bar b(x) u_{txx}\dif x=-\frac{\dif}{\dif t} \int_0^1 \frac{\tau}{\mu}  p(v)_{xx}\bar b(x) u_{xx} \dif x+\frac{\tau}{\mu}\int_0^1 p(v)_{xxt} \bar b(x)u_{xx}\dif x\\
&\ge-\frac{\dif}{\dif t}\int_0^1 \frac{\tau}{\mu} p_{xx}(v) \bar b u_{xx}\dif x +\frac{\tau}{\mu} \int_0^1 p^\prime(v)u_{xx}^2\dif x +\frac{\tau}{\mu} p^\prime(v) u_{xx}^2 \bar b(x)\big|_0^1-CE^\frac{1}{2}(t)\mathcal D(t)\\
 &\ge-\frac{\dif}{\dif t}\int_0^1 \frac{\tau}{\mu} p_{xx}(v) \bar b u_{xx}\dif x +\frac{\tau}{\mu} \int_0^1 p^\prime(v)u_{xx}^2\dif x-CE^\frac{1}{2}(t)\mathcal D(t),
\end{align*} 
since $p^\prime(v)<0$. 

For the last term on the left-hand side of equation \eqref{3.26}, we get
\begin{align*}
&-\int_0^1 \left(\frac{\mu u_x}{v}\right)_{xx} \frac{v}{\mu} \bar b(x) u_{xx}\dif x\\
&=-\int_0^1\left(\frac{1}{v} u_{xxx}-\frac{2v_x}{v^2} u_{xx}+\frac{2v_x^2}{v^3} u_x-\frac{v_{xx}}{v^2}u_x\right)v \bar b(x)u_{xx}\dif x\\
&\ge -\frac{1}{2}\bar b(x) u_{xx}^2 \big|_0^1-\int_0^1 u_{xx}^2\dif x-CE^\frac{1}{2}(t)\mathcal D(t)\\
&\ge \frac{1}{2} (u_{xx}^2(t,1)+u_{xx}^2(t, 0))-\int_0^1 u_{xx}^2 \dif x-CE^\frac{1}{2}(t)\mathcal D(t).
\end{align*}

For the right-hand side of the equation \eqref{3.26}, we have
\begin{align*}
&\int_0^1 \tau \epsilon \left(\frac{b(x)S_x}{v}\right)_{xx} \frac{v}{\mu} \bar b(x)u_{xx}\dif x \\
&=\int_0^1 \tau \epsilon \left( \frac{b(x)}{v}S_{xxx}+\frac{4}{v} S_{xx}-\frac{2 b(x) v_x}{v^2}S_{xx}-\frac{4v_x}{v^2}S_x-\frac{b(x)v_{xx}}{v^2}S_x+\frac{2 b(x)v_x^2}{v^3} S_x\right)\frac{v}{\mu} \bar b(x)u_{xx}\dif x\\
&\le \frac{\tau \epsilon}{\mu} \int_0^1 b(x) S_{xxx} \bar b(x) u_{xx} \dif x +\frac{4\tau \epsilon}{\mu} \int_0^1 S_{xx} u_{xx} \dif x +CE^\frac{1}{2}(t)\mathcal D(t).
\end{align*}
In view of  equations $\eqref{2.1}_1$-$\eqref{2.1}_2$, we have
\begin{align*}
&\int_0^1 b(x) S_{xxx} \bar b(x) u_{xx}\dif x=-\int_0^1 b^2(x)(u_{txx}+p(v)_{xxx})u_{xx}\dif x\\
&\le -\frac{\dif}{\dif t}\int_0^1 \frac{1}{2}b^2(x) u_{xx}^2\dif x-\int_0^1 b^2(x)p^\prime(v) v_{xxx}u_{xx} \dif x+CE^\frac{1}{2}(t)\mathcal D(t)\\
&\le -\frac{\dif}{\dif t} \int_0^1 \frac{1}{2} b^2(x)u_{xx}^2 \dif x-\left[b^2(x)p^\prime(v) v_{xx}u_{xx} \right]\big|_0^1+\int_0^1 (b^2(x)p^\prime(v) u_{xx})_x v_{xx}\dif x +CE^\frac{1}{2}(t)\mathcal D(t)\\
&\le \frac{\dif}{\dif t} \int_0^1 (-\frac{1}{2} b^2 u_{xx}^2+\frac{1}{2} b^2(x)p^\prime(v) v_{xx}^2)\dif x+\eta[ v_{xx}^2(t,1)+v_{xx}^2(t,0)]+C(\eta) [u_{xx}^2(t,1)+u_{xx}^2(t,0)]\\
&\qquad\qquad\qquad\qquad\qquad\qquad\qquad\qquad\qquad\qquad+4\int_0^1 b(x)p^\prime(v) u_{xx}v_{xx}\dif x+CE^\frac{1}{2}(t)\mathcal D(t).
\end{align*}
where $\eta$ is sufficiently small and to be chosen later.
 
Combining the above result, we get
\begin{align*}
&\frac{1}{2} (u_{xx}^2(t,1)+u_{xx}^2(t,0))+\frac{\tau}{\mu} (u_{tx}^2 (t,1)+u_{tx}^2(t,0))\\
&\le \frac{\dif}{\dif t} \int_0^1 \left(-\frac{\tau}{\mu} \bar b(x) (S_{xx}-p_{xx}(v))u_{xx}\dif x+\frac{\tau \epsilon}{2\mu} b^2(x)(u_{xx}^2-p^\prime(v) v_{xx}^2)\right)\dif x\\
&+ \int_0^1\left( \frac{\tau}{\mu}(2 u_{tx}^2-p^\prime(v) u_{xx}^2 )+ \frac{v}{\mu} \bar b(x) S_{xx} u_{xx}+ u_{xx}^2+\frac{4\tau\epsilon}{\mu}  (S_{xx}+b(x)p^\prime(v) v_{xx}) u_{xx}\right) \dif x+C E^\frac{1}{2}(t)\mathcal D(t)\\
&+\frac{\tau\epsilon}{\mu} \eta[ v_{xx}^2(t,1)+v_{xx}^2(t,0)]+\frac{\tau\epsilon}{\mu} C(\eta) [u_{xx}^2(t,1)+u_{xx}^2(t,0)] .
\end{align*}
On the other hand, from the momentum equation $\eqref{2.1}_2$, we have
\begin{align*}
v_{xx}^2(t,1)+v_{xx}^2(t,0)\le C (S_{xx}^2(t,1)+S_{xx}^2(t,0)+u_{tx}^2(t,1)+u_{tx}^2(t,0))+C E^\frac{1}{2}(t)\mathcal D(t).
\end{align*}
Therefore, by choosing $\eta=\frac{1}{4C} $, $\epsilon$ sufficiently small and  integrating over $(0, t)$, we get
\begin{align}
&\int_0^t \frac{1}{2} (u_{xx}^2(t,1)+u_{xx}^2(t,0)) \dif t \nonumber\\
&\le \int_0^1 (v_{tx}^2+\epsilon  v_{xx}^2 +C u_{tx}^2)\dif x+C  \int_0^t\int_0^1 (u_{tx}^2+u_{xx}^2 )\dif x \dif t+CE^\frac{1}{2}\int_0^t \mathcal D(s)\dif s \nonumber\\
&+C \epsilon \int_0^t\int_0^1 v_{xx}^2\dif x \dif t +\frac{1}{8\mu} \int_0^t\int_0^1 S_{xx}^2 \dif x \dif t
+\frac{\tau \epsilon}{4\mu} \int_0^t (S_{xx}^2(t,1)+S_{xx}^2(t, 0)) \dif t . \label{3.28}
\end{align}

% Estimate of u_{tx}|_\Omega
Next, we we give estimates of $u_{tx}$ on the boundary. Multiplying  $\eqref{3.19}_3$ by $\frac{v}{\mu} \bar b(x) u_{tx}$ and integrating the result, we get
\begin{align}
\int_0^1 \tau \left(\frac{S_t}{v}\right)_{tx} \frac{v}{\mu} \bar b(x) u_{tx} \dif x+\int_0^1 S_{tx}\frac{v}{\mu}\bar b(x) u_{tx} \dif x 
-\int_0^1 \left(\frac{\mu u_x}{v}\right)_{tx} \frac{v}{\mu} \bar b(x) u_{tx} \dif x \nonumber\\
=-\int_0^1\left( \tau\epsilon b(x) \frac{S_x}{v}\right)_{tx} \frac{v}{\mu} \bar b(x) u_{tx}\dif x . \label{3.29}
\end{align}
We estimate each term of the above equation, respectively. Firstly, we have
\begin{align*}
\int_0^1 \tau\left(\frac{S_t}{v}\right)_{tx} \frac{v}{\mu} \bar b(x) u_{tx} \dif x
=\int_0^1 \tau \left(\frac{S_{ttx}}{v}-\frac{2v_x}{v^2} S_{tt}+\frac{2v_xv_t}{v^3} S_t-\frac{v_{tx}}{v^2} S_t\right)\frac{v}{\mu} \bar b(x) u_{tx}\dif x\\
\ge \frac{\tau}{\mu} \frac{\dif}{\dif t}\int_0^1 S_{tx} \bar b(x) u_{tx}\dif x -\frac{\tau}{\mu}\int_0^1 (u_{tt}+p(v)_{tx}) \bar b(x) u_{ttx}\dif x -C E^\frac{1}{2}(t) \mathcal D(t),
\end{align*}
where
\begin{align*}
-\frac{\tau}{\mu} \int_0^1 u_{tt} \bar b(x) u_{ttx}\dif x=-\frac{2\tau}{\mu}\int_0^1  u_{tt}^2\dif x
\end{align*}
and
\begin{align*}
&-\frac{\tau}{\mu} \int_0^1 p(v)_{tx} \bar b(x) u_{ttx}\dif x=-\frac{\dif}{\dif t} \int_0^1 \frac{\tau}{\mu}  p(v)_{tx}\bar b(x) u_{tx} \dif x+\frac{\tau}{\mu}\int_0^1 p(v)_{ttx} \bar b(x)u_{tx}\dif x\\
&\ge-\frac{\dif}{\dif t}\int_0^1 \frac{\tau}{\mu} p_{tx}(v) \bar b u_{tx}\dif x +\frac{\tau}{\mu} \int_0^1 p^\prime(v)u_{tx}^2\dif x +\frac{\tau}{\mu} p^\prime(v) u_{tx}^2 \bar b(x)\big|_0^1-CE^\frac{1}{2}(t)\mathcal D(t)\\
 &\ge-\frac{\dif}{\dif t}\int_0^1 \frac{\tau}{\mu} p_{tx}(v) \bar b u_{tx}\dif x +\frac{\tau}{\mu} \int_0^1 p^\prime(v)u_{tx}^2\dif x-CE^\frac{1}{2}(t)\mathcal D(t),
\end{align*} 
since $p^\prime(v)<0$. 

For the last term on the left-hand side of equation \eqref{3.28}, we get
\begin{align*}
&-\int_0^1 \left(\frac{\mu u_x}{v}\right)_{tx} \frac{v}{\mu} \bar b(x) u_{tx}\dif x\\
&=-\int_0^1\left(\frac{1}{v} u_{txx}-\frac{v_x}{v^2} u_{tx}-\frac{v_t}{v^2}u_{xx}+\frac{2v_x v_t}{v^3} u_x-\frac{v_{tx}}{v^2}u_x\right)v \bar b(x)u_{tx}\dif x\\
&\ge -\frac{1}{2}\bar b(x) u_{tx}^2 \big|_0^1-\int_0^1 u_{tx}^2\dif x-CE^\frac{1}{2}(t)\mathcal D(t)\\
&\ge \frac{1}{2} (u_{tx}^2(t,1)+u_{tx}^2(t, 0))-\int_0^1 u_{tx}^2 \dif x-CE^\frac{1}{2}(t)\mathcal D(t).
\end{align*}

For the right-hand side of the equation \eqref{3.28}, we have
\begin{align*}
&\int_0^1 \tau \epsilon \left(\frac{b(x)S_x}{v}\right)_{tx} \frac{v}{\mu} \bar b(x)u_{tx}\dif x \\
&=\int_0^1 \tau \epsilon \left( \frac{b(x)}{v}S_{txx}-\frac{ b(x) v_t}{v^2}S_{xx}-\frac{b(x) v_x}{v^2} S_{tx}-\frac{b(x)v_{tx}}{v^2}S_x+\frac{2 b(x)v_x v_t}{v^3} S_x\right)\frac{v}{\mu} \bar b(x)u_{tx}\dif x\\
&\le \frac{\tau \epsilon}{\mu} \int_0^1 b(x) S_{txx} \bar b(x) u_{tx} \dif x  +CE^\frac{1}{2}(t)\mathcal D(t).
\end{align*}
Using the  equations $\eqref{2.1}_1-\eqref{2.1}_2$, we have
\begin{align*}
&\int_0^1 b(x) S_{txx} \bar b(x) u_{tx}\dif x=-\int_0^1 b^2(x)(u_{ttx}+p(v)_{txx})u_{tx}\dif x\\
&\le -\frac{\dif}{\dif t}\int_0^1 \frac{1}{2}b^2(x) u_{tx}^2\dif x-\int_0^1 b^2(x)p^\prime(v) v_{txx}u_{tx} \dif x+CE^\frac{1}{2}(t)\mathcal D(t)\\
&\le -\frac{\dif}{\dif t} \int_0^1 \frac{1}{2} b^2(x)u_{tx}^2 \dif x-\left[b^2(x)p^\prime(v) u_{xx}u_{tx} \right]\big|_0^1+\int_0^1 (b^2(x)p^\prime(v) u_{tx})_x u_{xx}\dif x +CE^\frac{1}{2}(t)\mathcal D(t)\\
&\le \frac{\dif}{\dif t} \int_0^1 (-\frac{1}{2} b^2 u_{tx}^2+\frac{1}{2} b^2(x)p^\prime(v) u_{xx}^2)\dif x+C[ u_{xx}^2(t,1)+u_{xx}^2(t,0)+u_{tx}^2(t,1)+u_{tx}^2(t,0)]\\
&\qquad\qquad\qquad\qquad\qquad\qquad\qquad\qquad\qquad\qquad+4\int_0^1 b(x)p^\prime(v) u_{xx}u_{tx}\dif x+CE^\frac{1}{2}(t)\mathcal D(t).
\end{align*}

Combining the above result, we get
\begin{align*}
&\frac{1}{2} (u_{tx}^2(t,1)+u_{tx}^2(t,0))\le \frac{\dif}{\dif t} \int_0^1 \left(-\frac{\tau}{\mu} \bar b(x) (S_{tx}-p_{tx}(v))u_{tx}\dif x+\frac{\tau \epsilon}{2\mu} b^2(x)(u_{tx}^2-p^\prime(v) u_{xx}^2)\right)\dif x\\
&+C\epsilon\int_0^1 u_{xx}^2 \dif x+ \int_0^1\left( \frac{\tau}{\mu}(2 u_{tt}^2-p^\prime(v) u_{tx}^2 )+ \frac{v}{\mu} \bar b(x) S_{tx} u_{tx}+ u_{tx}^2\right) \dif x+C E^\frac{1}{2}(t)\mathcal D(t)\\
&+\frac{\tau\epsilon}{\mu} [ u_{xx}^2(t,1)+u_{xx}^2(t,0)].
\end{align*}
Integrating the above result over $(0, t)$, one obtain
\begin{align}
\int_0^t \frac{1}{2}( u_{tx}^2(t, 1)+u_{tx}^2(t, 0) )\dif t 
\le C(E_0+CE^\frac{1}{2}\int_0^t \mathcal D(s)\dif s+\int_0^1 (u_{tt}^2+u_{tx}^2+v_{tx}^2+\epsilon u_{xx}^2)\dif x \nonumber\\
+ \epsilon \int_0^t \int_0^1 u_{xx}^2 \dif x \dif t + \int_0^t\int_0^1 (u_{tt}^2+u_{tx}^2)\dif x \dif t 
+\epsilon \int_0^t ( u_{xx}^2(t,1)+u_{xx}^2(t,0)) \dif t) . \label{3.30}
\end{align}
Combining \eqref{3.28} and \eqref{3.30},  we get the desired result.
\end{proof}

On the other hand, using the equations $\eqref{2.1}_1$, $\eqref{2.1}_2$ and $\eqref{2.1}_3$ , we can easily get the second-order dissipation of $(v, u)$.
\begin{lemma}\label{Le3.11}
There exists a constant $C$ such that
\begin{align}
&\int_0^t\int_0^1 (u_{xx}^2+u_{tx}^2+u_{tt}^2+v_{tt}^2+v_{tx}^2)\dif x \dif t \nonumber \\
&\le C ( \int_0^t \int_0^1\tau^2 (S_{tx}^2+S_{tt}^2)\dif x\dif t+\epsilon \int_0^t\int_0^1 S_{xx}^2\dif x\dif t +E_0+E^\frac{1}{2}(t) \int_0^t \mathcal D(s)\dif s
 \label{3.31}
\end{align}
and
\begin{align}\label{3.32}
\int_0^t\int_0^1 v_{xx}^2\dif x \dif t \le C\int_0^t\int_0^1 (u_{tx}^2+S_{xx}^2)\dif x+CE (t)\int_0^t \mathcal D(s)\dif s.
\end{align}
\end{lemma}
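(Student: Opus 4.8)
The plan is to read every second-order derivative of $(v,u)$ off the three equations \eqref{2.1} algebraically, then square and integrate in $(x,t)$, using Lemma \ref{Le3.5} to absorb the first-order $S$-dissipation and \eqref{3.4} to keep $|p'(v)|$ bounded below.

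For \eqref{3.31} I would first invoke the mass law $\eqref{2.1}_1$, which gives the pointwise identities $v_{tt}=u_{tx}$ and $v_{tx}=u_{xx}$; this reduces the left-hand side to $\int_0^t\int_0^1(u_{xx}^2+u_{tx}^2+u_{tt}^2)$. Differentiating the constitutive law $\eqref{2.1}_3$ once in $x$ and once in $t$ and solving for the leading term yields
$$\mu u_{xx}=\tau S_{tx}+\tau\epsilon(bS_x)_x+(vS)_x,\qquad \mu u_{tx}=\tau S_{tt}+\tau\epsilon bS_{tx}+(vS)_t .$$
Squaring and integrating, the terms $\tau S_{tx}$ and $\tau S_{tt}$ produce exactly $\tau^2\int_0^t\int_0^1(S_{tx}^2+S_{tt}^2)$; from $(bS_x)_x=2S_x+bS_{xx}$ the $\tau\epsilon$-parts are controlled by $\epsilon\int_0^t\int_0^1 S_{xx}^2$ (since $\tau^2\epsilon\le1$) together with $\tau^2\epsilon^2\int_0^t\int_0^1 S_x^2$; the linear pieces $vS_x,vS_t$ and the just-named $S_x^2$ fall under Lemma \ref{Le3.5}, whereas the nonlinear pieces $v_xS,v_tS$ are absorbed into $E^{\frac12}(t)\int_0^t\mathcal{D}$.

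The delicate term is $u_{tt}$, since only the momentum equation carries a pure second time derivative. Differentiating $\eqref{2.1}_2$ in $t$ gives $u_{tt}=S_{tx}-p'(v)u_{xx}-p''(v)v_xu_x$, so $\int_0^t\int_0^1u_{tt}^2$ is bounded by $\int_0^t\int_0^1 S_{tx}^2$ plus the already-estimated $\int_0^t\int_0^1u_{xx}^2$ plus an $E^{\frac12}\int_0^t\mathcal{D}$ remainder. I expect this to be the main obstacle: the $S_{tx}$ here cannot be traded for a $\tau$-weighted copy (substituting $\tau S_{tx}$ from the constitutive law only controls the $E$-level quantity $\tau^2u_{tt}^2$, not the dissipation-level $u_{tt}^2$), so the argument must lean on the factor-free $S_{tx}$-dissipation already produced with the favourable sign $\tfrac{1}{4\mu}$ in Lemma \ref{Le3.8}. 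This is precisely the sense in which Lemma \ref{Le3.11} compares the $(v,u)$-dissipation against the $S$-dissipation, and it is what fixes the ordering of the lemmas.

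For \eqref{3.32} I would instead differentiate the momentum equation $\eqref{2.1}_2$ in $x$, obtaining $p'(v)v_{xx}=S_{xx}-u_{tx}-p''(v)v_x^2$; since \eqref{3.4} guarantees $|p'(v)|\ge c>0$, solving for $v_{xx}$, squaring and integrating gives $\int_0^t\int_0^1 v_{xx}^2\le C\int_0^t\int_0^1(S_{xx}^2+u_{tx}^2)+C\int_0^t\int_0^1 v_x^4$, and the $v_x^4$ remainder is estimated by $\sup_s\|v_x(s)\|_{H^1}^2\int_0^t\int_0^1 v_x^2\le CE(t)\int_0^t\mathcal{D}$, which explains the $E$ (rather than $E^{\frac12}$) weight appearing in \eqref{3.32}.
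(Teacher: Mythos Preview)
Your approach is correct and is exactly what the paper indicates: the paper supplies no detailed proof of Lemma~\ref{Le3.11}, only the one-line remark ``using the equations $\eqref{2.1}_1$, $\eqref{2.1}_2$ and $\eqref{2.1}_3$, we can easily get the second-order dissipation of $(v,u)$,'' so your algebraic reading of $u_{xx},u_{tx},v_{tt},v_{tx}$ off the mass and constitutive laws and of $u_{tt},v_{xx}$ off the momentum law is precisely the intended argument (and mirrors the first-order template of Lemma~\ref{Le3.6}). Your diagnosis of the $u_{tt}$ term is also on the mark: the identity $u_{tt}=S_{tx}-p'(v)u_{xx}-p''(v)v_tv_x$ unavoidably produces an \emph{unweighted} $\int_0^t\!\int_0^1 S_{tx}^2$, so \eqref{3.31} as literally written (with the prefactor $\tau^2$ on $S_{tx}^2$) is slightly optimistic for that one piece; but, as you correctly note, Lemma~\ref{Le3.8} already controls $\int_0^t\!\int_0^1 S_{tx}^2$ by $C(E_0+E^{3/2}+E^{1/2}\!\int_0^t\mathcal D)$, so the combination still closes Proposition~\ref{Prop3.1}.
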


Therefore, combining Lemmas \ref{Le3.5}-\ref{Le3.11}, the proof of Proposition \ref{Prop3.1} is finished.

\section{Passing to the Limit and  proof of Main Theorems}
{\bf Proof of Theorem \ref{th1.1}:} According to Proposition \ref{Prop3.1},  the local solution $(v^\epsilon, u^\epsilon, S^\epsilon)$ can be extended to $[0, \infty)$ by usual continuation methods. Thus, for fixed $\epsilon$, we get a global solution $(v^\epsilon, u^\epsilon, S^\epsilon)$ to system \eqref{2.1}-\eqref{2.3} satisfying 
\begin{align}\label{4.1}
\sup_{0\le t <\infty}\|(v^\epsilon-1, u^\epsilon, \sqrt{\tau}S^\epsilon)(t, \cdot)\|_{H^2}^2+ \int_0^\infty \left( \|(v^\epsilon_x, u^\epsilon_x)\|_{H^1}^2+ \|S^\epsilon\|_{H^2}^2 \right) \dif t 
\le C E_0,
\end{align}
where $C$ is a constant independent of $\epsilon$ and $\tau$.
Therefore, the uniform bounds of $(v^\epsilon, u^\epsilon, \sqrt{\tau} S^\epsilon)$ in $L^\infty([0, \infty), H^2(\Omega)$  implies  that there exists 
$(v, u, \sqrt{\tau} S)\in L^\infty([0, \infty), H^2(\Omega)$ such  that 
\begin{align*}
(v^\epsilon, u^\epsilon, \sqrt{\tau} S^\epsilon) \rightharpoonup (v, u, \sqrt{\tau} S)\quad \mathrm{weak-}\ast \, \mathrm{in}\quad L^\infty([0, \infty), H^2(\Omega).
\end{align*}
On the other hand, since $(\partial_tv^\epsilon, \partial_t u^\epsilon, \partial_t S^\epsilon)$ are bounded in $L^2(0, T; H^1)$ for any $T>0$, we have
$(v, u, S)\in C([0,T], H^1)$. Moreover, by classical compactness theorem, for any $\delta_0>0$, $(v^\epsilon, u^\epsilon, S^\epsilon)$ are relatively compact in $C([0, T], H^{2-\delta_0})$. As a consequence, as $\epsilon\rightarrow 0$ and up to subsequences, 
\begin{align*}
(v^\epsilon, u^\epsilon, S^\epsilon)\rightarrow (v, u, S), \quad \mathrm{strongly}\quad \mathrm{in}\quad C([0, T], H^{2-\delta_0}(\Omega)).
\end{align*}
Passing to the limit in \eqref{2.1} and \eqref{4.1}, the proof of Theorem \ref{th1.1} is finished.\\

{\bf Proof of Theorem \ref{th1.2}:}  Let  $(v^\tau, u^\tau, S^\tau)$ be the global solutions obtained in Theorem \ref{th1.1}, then we have
\begin{align}
\sup_{0\le t <\infty}\|(v^\tau-1, u^\tau, \sqrt{\tau}S^\tau)(t, \cdot)\|_{H^2}^2+ \int_0^\infty \left( \|(v^\tau_x, u^\tau_x)\|_{H^1}^2+ \|S^\tau\|_{H^2}^2 \right) \dif t \le C E_0,
\end{align}
where $C$ is a constant independent of $\tau$. Thus, there exists $(v^0, u^0)\in L^\infty((0, \infty), H^2(\Omega))$ and $S^0\in L^2((0, \infty), H^2(\Omega))$ such that
\begin{align*}
(v^\tau, u^\tau) \rightharpoonup (v^0,  u^0)\quad \mathrm{weak-}\ast \, \mathrm{in}\quad L^\infty((0, \infty), H^2(\Omega)),\\
S^\tau  \rightharpoonup S^0\quad \mathrm{weakly} \quad \mathrm{in}\quad L^2((0, \infty), H^2(\Omega))
\end{align*}
For any $T>0$, it is easy to see that both $\partial_t v^\tau$ and $\partial_t u^\tau$ are bounded in $L^2((0,T), H^1(\Omega))$. Therefore, $(v^0, u^0)\in C([0, T], H^1(\Omega))$. Furthermore, by classical compactness theorem, for any $\delta_0>0$, $(v^\tau, u^\tau)$ are relatively compact in $C([0, T], H^{2-\delta_0})$. As a consequence, as $\tau\rightarrow 0$ and up to subsequences, 
\begin{align*}
(v^\tau, u^\tau)\rightarrow (v^0, u^0), \quad \mathrm{strongly}\quad \mathrm{in}\quad C([0, T], H^{2-\delta_0}(\Omega)).
\end{align*}
On the other hand, the uniform boundedness of $\sqrt{\tau}S^\tau$ yields $\tau S^\tau \rightarrow 0$ in $L^\infty((0,\infty), H^2(\Omega))$ as $\tau \rightarrow 0$, which leads to 
$\tau \partial_t S^\tau \rightarrow 0$ in $D^\prime((0,\infty), H^2(\Omega))$ as $\tau\rightarrow 0$. Therefore, 
\begin{align}
S^0=\frac{\mu (u^0)_x}{v^0} \quad \mathrm{a.e.} \quad (0,\infty)\times \Omega.
\end{align}
Then, passing to the limit in \eqref{1.5}, the proof of Theorem \ref{th1.2} is finished.

{\bf Acknowledgement:} Yuxi Hu's Research is supported by the Fundamental Research Funds for the Central Universities (No. 2023ZKPYLX01).

{\bf Data availability statement}: There is no data used in this paper.

\end{document}